\newtheorem{The}{Theorem}[section]
\newtheorem{Def}{Definition}[section]
\numberwithin{equation}{section}
\begin{document}
{\title{Solving systems of multi-term fractional PDEs: Invariant subspace approach. }
	\author{Sangita Choudhary and Varsha Daftardar-Gejji\footnote{Author for correspondence}\\
			Department of Mathematics\\
			Savitribai Phule Pune University, Pune 411007.\\
			\textit{schoudhary1695@gmail.com,\, vsgejji@gmail.com,}\\
			\textit{vsgejji@unipune.ac.in}
			}
		\date{}
		\maketitle }
	\begin{abstract}
In the present paper invariant subspace method has been extended for solving systems of multi-term fractional partial differential equations (FPDEs) involving both time and space fractional derivatives. Further the method has also been employed for solving multi-term fractional PDEs in $(1+n)$ dimensions. A diverse set of examples is solved to illustrate the method. 
		
		 \medskip
		 
		 \textit{Key Words and Phrases:} time and space fractional partial differential equation in higher dimension, systems of fractional partial differential equation, exact solution, invariant subspace method.
	\end{abstract}
	\section{Introduction}
	\paragraph{}Fractional order differential equations (FODEs) are receiving increasing attention owing to their applicability to almost all branches of science and engineering. It has been established that fractional order partial differential equations (FPDEs) provide appropriate framework for description of anomalous and non-Brownian diffusion. They are more effective while formulating processes having memory effects as fractional derivatives are non-local in nature \cite{bhalekar2011fractional,debnath2003recent,podlubny1998fractional}.
	
	Hence solving FODEs, FPDEs, especially nonlinear ones, is a challenging task and currently an active area of research. In pursuance to this researchers have developed new numerical/ analytical methods for solving FPDEs such as Adomian decomposition method (ADM)\cite{adomian1989solution}, new iterative method (NIM)\cite{bhalekar2008new},  iterative Laplace transform method \cite{jafari2013new}, method of separation of variables, homogeneous balanced principle \cite{wu2018method}, Lie symmetry analysis method \cite{ovsiannikov2014group} and so on.
	
	One of the analytical methods for solving PDEs is invariant subspace method developed by Galaktionov and Svirshchevskii \cite{galaktionov2006exact}. Invariant subspace method was employed for solving time fractional PDEs by many authors \cite{gazizov2013construction, sahadevan2016exact}. Further Choudhary and Daftardar-Gejji \cite{choudhary2017invariant} extended the method for FPDEs having both time and space fractional derivatives. In 2009 a classification of two-component nonlinear diffusion equations based on invariant subspace method was proposed \cite{qu2009classification}. Sahadevan \textit{et al.} did extensive study of Lie symmetry analysis and invariant subspace method for deriving exact solutions of the coupled FPDEs with fractional time derivative \cite{sahadevan2017lie}.
	
	In the present paper we develop invariant subspace method for finding analytic solutions of systems of multi-term FPDEs having both time and space  fractional derivatives. Further the method is employed for solving FPDEs in (1+n) dimensions. In the proposed method system of FPDEs and FPDEs in higher dimensions are reduced to respective system of FODEs which can be solved by known methods. Invariant subspace method is also used to solve FPDEs with fractional differential operator involving mixed fractional partial derivatives.
	
	The organization of the paper is as follows. Section 2, deals with preliminaries and notations. In Section 3, we develop theory of invariant subspace method for r-coupled FPDEs, which is followed by illustrative examples. In Section 4 we extend invariant subspace method for FPDEs in (1+n) dimensions and explain the method with a variety of illustrative examples. Concluding remarks are made in Section 5.

	\section{Preliminaries and Notations}
	In this section, we introduce notations, definitions and preliminaries which are used in the present article. For more details readers may refer to \cite{diethelm2010analysis,miller1993introduction,podlubny1998fractional}.
	\begin{Def}The Riemann-Liouville (R-L) fractional integral of order $\alpha > 0$ of function $f$ is defined as
		\begin{equation*}
		I^{\alpha}f(t)=\frac{1}{\Gamma(\alpha)}\int\limits_{0}^{t}\frac{f(\tau)}{(t-\tau)^{1-\alpha}}d\tau,~~0<t\leq b.
		\end{equation*}
	\end{Def}
	\begin{Def}
		Caputo fractional derivative of order $\alpha>0$ of $f$ is defined as
		\begin{align*}
		\frac{ d^{\alpha}f(t)}{ dt^\alpha}=
		\left\{\begin{array}{lcr}I^{n-\alpha}D^{n}f(t)=\dfrac{1}{\Gamma(n-\alpha)}\displaystyle\int\limits_{0}^{t}\frac{f^{(n)}(\tau)}{(t-\tau)^{\alpha-n+1}}d\tau,~~~~~\mbox{$n-1 < \alpha < n,$}\\
		f^{(n)}(t),\hspace{6.4cm} \mbox{$\alpha=n,~n \in \mathbb{N}.$}
		\end{array}\right.
		\end{align*}
	\end{Def}
		\begin{Def}
		Riemann-Liouville (R-L) fractional derivative of order $\alpha>0$ of $f$ is defined as
			\begin{align*}
			\frac{^{RL} d^{\alpha}f(t)}{ dt^\alpha}=
			\left\{\begin{array}{lcr}D^{n}I^{n-\alpha}f(t)=\dfrac{ d^n}{ dt^n}\left(\dfrac{1}{\Gamma(n-\alpha)}\displaystyle\int\limits_{0}^{t}\frac{f(\tau)}{(t-\tau)^{\alpha-n+1}}dx\right),~~~~~\mbox{$n-1 < \alpha < n,$}\\
			f^{(n)}(t),\hspace{7.4cm} \mbox{$\alpha=n,~n \in \mathbb{N}.$}
			\end{array}\right.
			\end{align*}
		\end{Def}
	\noindent	R-L integral, Caputo derivative and R-L derivative satisfy the following properties for $\lceil\alpha\rceil=n, n\in \mathbb{N}$ \cite{diethelm2010analysis}:
	\begin{enumerate}
\item $	I^\alpha t^\gamma=\frac{\Gamma(\gamma+1)}{\Gamma(\gamma+\alpha+1)}t^{\gamma+\alpha},\qquad\qquad\qquad~~~ \text{if}~ \gamma>-1,~t>0.$
	
\item $
\dfrac{ d^{\alpha}t^\gamma}{ dt^\alpha} =\left\{\begin{array}{lcr}
0,\qquad\qquad\qquad\qquad\quad~~\text{if}~\gamma\in \{0,1,2,\dots,n-1\},\vspace{.1cm}\\
\frac{\Gamma(\gamma+1)}{\Gamma(\gamma-\alpha+1)}t^{\gamma-\alpha},\qquad\qquad\quad~~\text{if}~\gamma\in \mathbb{N}~\text{and}~\gamma \geq n, ~\text{or}~\gamma\notin \mathbb{N}~\text{and}~\gamma>n-1.
\end{array}\right.$

\item $	\dfrac{^{RL} d^{\alpha}t^\gamma}{~~ dt^\alpha} =\left\{\begin{array}{lcr}
0,\qquad\qquad\qquad\qquad~~~ \text{if}~\gamma>-1 ~\text{and} ~\alpha-\gamma\in \{0,1,\dots,n-1\},\vspace{.1cm}\\
\frac{\Gamma(\gamma+1)}{\Gamma(\gamma-\alpha+1)}t^{\gamma-\alpha}, \qquad\qquad~~~\text{if} ~\gamma>-1~\text{and}~\alpha -\gamma\notin \mathbb{N}.
\end{array}\right.$

\item $I^\alpha \left(\dfrac{ d^{\alpha} f(t)}{ dt^\alpha}\right)=f(t)-\displaystyle\sum_{k=0}^{n-1}D^{(k)}f(0)\frac{t^k}{k!},~~ n-1<\alpha<n,~t>0.$
	\end{enumerate}
\textbf{Note:}	In the property (2) condition $\gamma>n-1$ 	is very crucial as $$\dfrac{ d^{\alpha}(t^{-\alpha})}{ dt^\alpha},$$
 is not defined in case of Caputo derivate for $0< \alpha<1$. In the literature many authors  are mistakenly ignoring the underlying required condition $\gamma>n-1$ (here $\gamma=-\alpha>0$ is required but $-1<-\alpha<0$). Therefore
  $$\dfrac{ d^{\alpha}(t^{-\alpha})}{ dt^\alpha}=\dfrac{\Gamma(1-\alpha)}{\Gamma(1-2\alpha)}t^{-2\alpha},$$ is not valid in case of Caputo derivative, though it holds correct for R-L derivative:
 $$\dfrac{^{RL} d^{\alpha}(t^{-\alpha})}{ dt^\alpha}=\dfrac{\Gamma(1-\alpha)}{\Gamma(1-2\alpha)}t^{-2\alpha},~\gamma=-\alpha>-1.$$
 
In the present work we denote fractional partial derivative $\dfrac{\partial ^{k\gamma}}{\partial t^{k\gamma}}$ and $ \dfrac{^{RL}\partial ^{k\gamma}}{\partial t^{k\gamma}}$ (Caputo and RL partial derivative respectively) as sequential fractional partial derivative \cite{miller1993introduction}, \textit{viz}.,
\begin{equation*}
\dfrac{\partial ^{k\gamma}f}{\partial t^{k\gamma}}=\underbrace{\dfrac{\partial ^{\gamma}}{\partial t^{\gamma}}\dfrac{\partial ^{\gamma}}{\partial t^{\gamma}}\cdots\dfrac{\partial ^{\gamma}f}{\partial t^{\gamma}}}_{k-times},~~~\dfrac{^{RL}\partial ^{k\gamma}f}{\partial t^{k\gamma}}=\underbrace{\dfrac{^{RL}\partial ^{\gamma}}{\partial t^{\gamma}}\dfrac{^{RL}\partial ^{\gamma}}{\partial t^{\gamma}}\cdots\dfrac{^{RL}\partial ^{\gamma}f}{\partial t^{\gamma}}}_{k-times}.
\end{equation*}

	\begin{Def}Two-parametric Mittag-Leffler function is defined as	\begin{equation*}E_{\alpha, \beta} (z)=\sum\limits_{k=0}^{\infty}\frac{z^{k}}{\Gamma(k\alpha+\beta)},~~\alpha>0,~\beta>0.
	\end{equation*}\end{Def}
\noindent The $n-$th order derivative of $E_{\alpha,\beta}(z)$ is given by
	\begin{equation*} E_{\alpha,\beta}^{(n)}(z)=\frac{ d^{n}}{ dz^{n}}E_{\alpha,\beta}(z)=\displaystyle\sum\limits_{k=0}^{\infty}\frac{(k+n)! z^{k}}{k! \Gamma(\alpha k+\alpha n+\beta)},~~n=0,1,2,\ldots.\end{equation*}
The $\alpha-$th order Caputo derivative of $E_\alpha(at^\alpha)$ is
	\begin{equation*}
\frac{	 d^\alpha}{ dt^\alpha} [E_\alpha(at^\alpha)]=aE_\alpha(at^\alpha),~~\alpha>0,~a\in \mathbb{R}.
	\end{equation*}
Generalized fractional trignometric functions for $\lceil\gamma\rceil=n$ are defined as \cite{bonilla2007fractional}
\begin{align*}
\cos_{\gamma}(\lambda t^{\gamma})&=\mathcal{R}e[E_{\gamma}(i\lambda ^{\gamma})]=\sum_{k=0}^{\infty}\frac{(-1)^k\lambda^{2k}t^{(2k)\gamma}}{\Gamma(2k\gamma+1)},\\
\sin_{\gamma}(\lambda t^{\gamma})&=\mathcal{I}m[E_{\gamma}(i\lambda ^{\gamma})]=\sum_{k=0}^{\infty}\frac{(-1)^k\lambda^{2k+1}t^{(2k+1)\gamma}}{\Gamma((2k+1)\gamma+1)}.
\end{align*}
The fractional trigonometric functions satisfy the following properties
\begin{equation*}
\frac{	 d^\alpha}{ dt^\alpha}[\cos_{\gamma}(\lambda t^{\gamma})]=-\lambda\sin_{\gamma}(\lambda t^{\gamma}),~\frac{	 d^\alpha}{ dt^\alpha}[\sin_{\gamma}(\lambda t^{\gamma})]=\lambda\cos_{\gamma}(\lambda t^{\gamma}).
\end{equation*}	
		\noindent Laplace transform of the Caputo derivative of order $\alpha$ is,
	$$\mathcal{L}\left\{\frac{	 d^\alpha f(t)}{ dt^\alpha};s\right\}=s^{\alpha}\hat{f}(s)-\sum\limits_{k=0}^{n-1}s^{\alpha-k-1}f^{(k)}(0),~~n-1<\alpha\leq{n},~n\in\mathbb{N},~\mathcal{R}e(s)>0,$$where $\hat{f}(s)=\mathcal{L}\{f(t);s\}=\int\limits_{0}^{\infty}e^{-st}f(t)dt,~~s\in\mathbb{R}.$\\
	Laplace transform of $\varepsilon_n(t,a;\alpha,\beta):=t^{\alpha n+\beta-1}E_{\alpha,\beta}^{(n)}(\pm at^{\alpha})$ has the form 
	\begin{equation}\label{2.P1}
	\mathcal{L}\{\varepsilon_n(t,a;\alpha,\beta);s\}=\frac{n!s^{\alpha-\beta}}{(s^{\alpha}\mp a)^{n+1}},~~\mathcal{R}e(s)>|a|^{\frac{1}{\alpha}},~~n=0,1,2,\ldots.
	\end{equation}

	\section{System of FPDES}
In this section we extend invariant subspace method for solving systems of FPDEs. We introduce the following notations:\\
	Let $f=(f_1, f_2,\dots, f_r) = (f_1(t,x), f_2(t,x),\dots, f_r(t,x))\in \mathbb {R}^r,$ where $ t>0, x \in \mathbb{R}.$	\begin{align*}
	N^1[f]&:= (N_1^1[f],N_2^1[f],\dots,N_r^1[f])\in \mathbb {R}^r, \textrm{where}\\
N_p^1[f]&:=\hat{N}_p^1\left [x, f_1,f_2,\dots f_r,\frac{\partial^{\beta}f_1}{\partial x^{\beta}},\dots, \frac{\partial^{\beta}f_r}{\partial x^{\beta}},\dots, \frac{\partial^{k\beta}f_1}{\partial x^{k\beta}},\dots, \frac{\partial^{k\beta}f_r}{\partial x^{k\beta}}\right ], 1\leq p\leq r,~\text{and}\\
N^2[f]&= (N_1^2[f],N_2^2[f],\dots,N_r^2[f])\in \mathbb {R}^r, \textrm{where}\\
	N_p^2[f]&=\hat{N}_p^2\left [x, f_1,\dots f_r,\frac{\partial^{\beta}f_1}{\partial x^{\beta}},\dots, \frac{\partial^{\beta}f_r}{\partial x^{\beta}},\dots, \frac{\partial^{\beta+k-1}f_1}{\partial x^{\beta+k-1}},\dots, \frac{\partial^{\beta+k-1}f_r}{\partial x^{\beta+k-1}}\right ], 1\leq p\leq r,~k\in \mathbb{N},
	\end{align*}
are linear/ non-linear fractional differential operators. Let $F = (F_1, F_2,\dots, F_r)\in \mathbb{R}^r$ be such that
\begin{equation*}
F_p=\sum_{i=1}^{m_p}\lambda _{pi}\frac{\partial ^{\gamma(i,p)}f_p(t,x)}{\partial t ^{\gamma(i,p)}},~ p=1,\dots, r,
\end{equation*}
where $\gamma(i,p)=i\alpha_p~ \text{or}~ \gamma(i,p)=\alpha_p+i-1.$\\
In this article $\frac{\partial^{j\beta}(\cdot)}{\partial x^{j\beta}}$ and $\frac{\partial^{\beta+j-1}(\cdot)}{\partial x^{\beta+j-1}},j=1,\dots,k$ denote Caputo derivatives with respect to variable $x$ and $\frac{\partial ^{\gamma(i,p)}f_p(\cdot)}{\partial t ^{\gamma(i,p)}}$ denotes Caputo or Riemann-Liouville  derivative with respect to variable $t$. $\lceil \alpha_p \rceil=s_p, \lceil \beta \rceil=s,$ where $s_p, s \in \mathbb{N}$. Henceforth throughout the article $p=1,\dots,r.$\\

We consider the system of coupled FPDEs as
\begin{equation}\label{2.SFPDE}
F=N^l[f],~l=1,2.
\end{equation}Eq. (\ref{2.SFPDE}) consists of 4-kinds of different systems for p=1, 2,\dots, r, \textit{viz.,}
\begin{multicols}{2}
\begin{itemize}
	\item $ F_p=\sum_{i=1}^{m_p}\lambda _{pi}\frac{\partial ^{i\alpha_p}f_p(t,x)}{\partial t ^{i\alpha_p}}=N_p^1[f],$
\item $ F_p=\sum_{i=1}^{m_p}\lambda _{pi}\frac{\partial ^{\alpha_p+i-1 }f_p(t,x)}{\partial t ^{\alpha_p+i-1}}=N_p^1[f],$
\end{itemize}
	\columnbreak
\begin{itemize}
	\item $ F_p=\sum_{i=1}^{m_p}\lambda _{pi}\frac{\partial ^{i\alpha_p}f_p(t,x)}{\partial t ^{i\alpha_p}}=N_p^2[f],$
	\item $ F_p=\sum_{i=1}^{m_p}\lambda _{pi}\frac{\partial ^{\alpha_p+i-1 }f_p(t,x)}{\partial t ^{\alpha_p+i-1}}=N_p^2[f].$
\end{itemize}
\end{multicols}
\noindent Further note that $F$ is a function of either Caputo derivative or Riemann-Liouville derivative, whereas $N^l[f]$ always involves only Caputo fractional derivatives.
\subsection{Invariant subspace method for systems of FPDEs}
Let $I= I_1^{n_1}\times I_2^{n_2}\times \cdots \times I_r^{n_r}$ represent a linear space where $I_p^{n_p}$ denotes an $n_p$ dimensional linear subspace over $\mathbb{R}$ spanned by $n_p$ linearly independent functions $\{\phi_p^j(x)\}_{j=1}^{n_p}$, \textit{i.e.,}
\begin{equation*}
I_p^{n_p}=\mathfrak{L}\left\{\phi_p^1(x),\phi_p^2(x),\dots, \phi_p^{n_p}(x)\right\}=\left\{\sum_{j=1}^{n_p}k_{pj}\phi_p^j(x)~\Big|~k_{pj}\in \mathbb{R},~ j=1,2,\dots, n_p\right\},~ \forall p.
\end{equation*}
$I$ is said to be invariant with respect to vector differential operators $N^l, l=1,2$ if $N^l$ satisfies the following condition $\forall p$.
\begin{align*}
N_p^l&:I_1^{n_1}\times I_2^{n_2}\times \cdots \times I_r^{n_r} \longrightarrow I_p^{n_p},~ l=1,2.
\end{align*}
Thus there exist expansion coefficients $\psi_p^j (j=1,2,\dots, n_p)$ of $N_p^l[f]$ with respect to the basis functions $\{\phi_p^j(x)\}_{j=1}^{n_p}$ such that 
\begin{align*}
N_p^l\left[\sum_{j=1}^{n_1}k_{1j}\phi_1^j(x),\dots, \sum_{j=1}^{n_r}k_{rj}\phi_r^j(x)\right]=\sum_{j=1}^{n_p}\psi_p^j(k_{11},k_{12},\dots, k_{1n_1},\dots,k_{r1},\dots, k_{rn_r})\phi_p^j(x),\\ (k_{p1},k_{p2}\dots, k_{pn_p})\in \mathbb{R}^{n_p}, \forall p,~l=1,2.
\end{align*}
\begin{The}\label{2.THE1}
	If a finite dimensional linear subspace $I= I_1^{n_1}\times I_2^{n_2}\times \cdots \times I_r^{n_r}$ is invariant under the fractional differential operators $N^l[F], l=1, 2,$ then the system of FPDEs (\ref{2.SFPDE}) has a solution of the form
	\begin{equation}\label{2.THE1.1}
	f_p(t,x)=\sum_{j=1}^{n_p}K_{pj}(t)\phi _p^j(x),~p=1, 2,\dots, r,
	\end{equation}
	where the coefficients $K_{pj}(t)$ satisfy the following system of FODEs
	\begin{align}\label{2.THE1.2}
	\sum_{i=1}^{m_p}\lambda_{pi}\frac{ d ^{\gamma(i,p)}K_{pj       }(t)}{ dt ^{\gamma(i,p)}}=\psi_p^j(K_{11}(t),\dots, K_{1n_1}(t),\dots,K_{r1}(t),\dots, K_{rn_r}(t)),~j=1,\dots, n_p,
	\end{align}
	where $\gamma=\gamma(i,p)=i\alpha_p~ \text{or}~ \gamma=\gamma(i,p)=\alpha_p+i-1,~ \forall p.$
\end{The}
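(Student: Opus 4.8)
The plan is to substitute the ansatz \eqref{2.THE1.1} into the system \eqref{2.SFPDE} and show that the invariance hypothesis forces the problem to collapse onto the coefficient functions $K_{pj}(t)$. First I would fix $p$ and write $f_p(t,x)=\sum_{j=1}^{n_p}K_{pj}(t)\phi_p^j(x)$, so that for each fixed $t$ the vector $f(t,\cdot)$ lies in $I=I_1^{n_1}\times\cdots\times I_r^{n_r}$, with the role of the constants $k_{qj}$ in the definition of invariance played by the time-dependent quantities $K_{qj}(t)$. Applying the spatial differential operator $N_p^l$ and using invariance of $I$ under $N^l$, I get
\begin{equation*}
N_p^l[f(t,\cdot)]=\sum_{j=1}^{n_p}\psi_p^j\bigl(K_{11}(t),\dots,K_{1n_1}(t),\dots,K_{r1}(t),\dots,K_{rn_r}(t)\bigr)\,\phi_p^j(x),
\end{equation*}
which is the right-hand side of the $p$-th equation expressed in the basis $\{\phi_p^j\}$.

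Next I would handle the left-hand side $F_p=\sum_{i=1}^{m_p}\lambda_{pi}\,\partial^{\gamma(i,p)}f_p/\partial t^{\gamma(i,p)}$. Here the key point is that $\partial^{\gamma(i,p)}/\partial t^{\gamma(i,p)}$ (whether Caputo or Riemann--Liouville, and whether $\gamma(i,p)=i\alpha_p$ in the sequential sense or $\gamma(i,p)=\alpha_p+i-1$) acts only on the $t$ variable, while the $\phi_p^j(x)$ are independent of $t$; hence it passes through the finite sum and acts on each coefficient $K_{pj}(t)$ alone, giving
\begin{equation*}
F_p=\sum_{j=1}^{n_p}\left(\sum_{i=1}^{m_p}\lambda_{pi}\frac{d^{\gamma(i,p)}K_{pj}(t)}{dt^{\gamma(i,p)}}\right)\phi_p^j(x).
\end{equation*}
Equating this with the expansion of $N_p^l[f]$ and invoking the linear independence of $\{\phi_p^1(x),\dots,\phi_p^{n_p}(x)\}$ over $\mathbb{R}$, I can match coefficients of each $\phi_p^j(x)$ to obtain exactly the system of FODEs \eqref{2.THE1.2}. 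Running this argument for every $p=1,\dots,r$ yields the full coupled system. Conversely, any solution $\{K_{pj}(t)\}$ of \eqref{2.THE1.2} reconstructs, via \eqref{2.THE1.1}, a solution of \eqref{2.SFPDE}, so the reduction is faithful.

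The main obstacle — or at least the point requiring care rather than deep ideas — is justifying that the fractional time derivative commutes with the spatial expansion and in particular that it ``sees'' only the $t$-dependence. For a finite linear combination $\sum_j K_{pj}(t)\phi_p^j(x)$ this is just linearity of $I^{\gamma}$, $D^n$ and hence of the Caputo/R-L operators in the integration variable $t$, together with the fact that $\phi_p^j(x)$ is constant in $t$; the sequential case $\gamma(i,p)=i\alpha_p$ needs the same observation applied $k$ times in succession. One should also note implicitly the mild regularity needed for the fractional $t$-derivatives of $K_{pj}$ to exist, but this is exactly what the statement ``$K_{pj}(t)$ satisfy the system of FODEs'' presupposes, so no separate argument is needed. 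A secondary point is to make explicit that the domain condition in the definition of invariance allows the coefficients to be arbitrary reals at each instant $t$, which is what lets us substitute $k_{qj}\mapsto K_{qj}(t)$; this is immediate since $(K_{p1}(t),\dots,K_{pn_p}(t))\in\mathbb{R}^{n_p}$ for each $t$.
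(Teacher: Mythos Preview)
Your proposal is correct and follows essentially the same approach as the paper: substitute the ansatz, use linearity of the fractional $t$-derivatives on the left, invariance of $I$ under $N^l$ on the right, and then invoke linear independence of $\{\phi_p^j\}$ to match coefficients. The paper's proof is slightly terser and does not spell out the converse direction or the regularity/commutation remarks, but the argument is the same.
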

\begin{proof}
	Let $f_p(t,x)=\displaystyle\sum_{j=1}^{n_p}K_{pj}(t)\phi _p^j(x),~\forall p.$\\
	Using the linearity property of fractional derivative we get
	\begin{align}\label{2.LHS1.1}
	F_p=\sum_{i=1}^{m_p}\lambda_{pi}\frac{\partial ^{\gamma(i,p)}f_p(t,x)}{\partial t ^{\gamma(i,p)}}=\sum_{i=1}^{m_p}\lambda_{pi}\frac{\partial ^{\gamma(i,p)}}{\partial t ^{\gamma(i,p)}}\left[\sum_{j=1}^{n_p}K_{pj}(t)\phi _p^j(x)\right]=\sum_{j=1}^{n_p}\left[\sum_{i=1}^{m_p}\lambda_{pi}\frac{ d ^{\gamma(i,p)}}{dt ^{\gamma(i,p)}}K_{pj}(t)\right]\phi _p^j(x),\nonumber\\\gamma=\gamma(i,p)=i\alpha_p~ \text{or}~ \gamma=\gamma(i,p)=\alpha_p+i-1,~ \forall p.
	\end{align}
	Given that the fractional differential operators $N^l[F], l=1, 2$ admit invariant subspace $I$, there exist basis functions $\phi_p^1(x), \phi_p^2(x),\dots, \phi_p^{n_p}(x)$ such that 
	\begin{align}\label{2.RHS1.1}
N_p^l\left[\sum_{j=1}^{n_1}k_{1j}\phi_1^j(x),\dots, \sum_{j=1}^{n_r}k_{rj}\phi_r^j(x)\right]=\sum_{j=1}^{n_p}\psi_p^j(k_{11},k_{12},\dots, k_{1n_1},\dots,k_{r1},\dots, k_{rn_r})\phi_p^j(x),\nonumber\\ (k_{p1},k_{p2}\dots, k_{pn_p})\in \mathbb{R}^{n_p}, \forall p,~l=1,2,
	\end{align}
	where $\{\psi^j_p\}'s$ are expansion coefficients of $N_p[I] \in I_p^{n_p}$ corresponding to $\{\phi_p^j\}'s$. Hence in view of Eq. (\ref{2.THE1.1}) and Eq. (\ref{2.RHS1.1}), we deduce
	\begin{align}\label{2.RHS1.2}
	N_p^l[f]&=N_p^l[f_1, f_2,\dots, f_r]=N_p^l\left[\sum_{j=1}^{n_1}K_{1j}(t)\phi_1^j(x),\sum_{j=1}^{n_2}K_{2j}(t)\phi_2^j(x),\dots, \sum_{j=1}^{n_r}K_{rj}(t)\phi_r^j(x)\right]\nonumber\\
	&=\sum_{j=1}^{n_p}\psi_p^j\left(K_{11}(t),\dots, K_{1n_1}(t),\dots,K_{r1}(t),\dots, K_{rn_r}(t)\right)\phi_p^j(x),~\forall p,~l=1,2.
	\end{align}
	Substituting (\ref{2.LHS1.1}) and (\ref{2.RHS1.2}) in (\ref{2.SFPDE}) we get
	\begin{align}\label{2.LHS=RHS1}
	\sum_{j=1}^{n_p}\left[	\sum_{i=1}^{m_p}\lambda_{pi}\frac{ d ^{\gamma(i,p)}K_{pj}(t)}{ dt ^{\gamma(i,p)}}-\psi_p^j(K_{11}(t),\dots, K_{1n_1}(t),\dots,K_{r1}(t),\dots, K_{rn_r}(t))\right]\phi_p^j(x)=0,~\forall p.
	\end{align}
	From the Eq. (\ref{2.LHS=RHS1}) and using the fact that $\{\phi_p^j\}'s$ are linearly independent, the system of FPDEs (\ref{2.SFPDE}) is reduced to the required system of FODEs (\ref{2.THE1.2}).
\end{proof}
\subsection{Illustrative examples}
\subsubsection{System of fractional version of generalized Burger's equations}Consider the following coupled generalized nonlinear fractional Burger's equations for $t>0,~\alpha \in  (0,1)\backslash \{1/2\}$ and $ \beta \in (0,1].$
	\begin{align}\label{2.1EX1}
	\frac{^{RL}\partial^{\alpha}f}{\partial t^{\alpha}}+a_0\frac{\partial^{2\beta}f}{\partial x^{2\beta}}+a_1f\left(\frac{\partial^{\beta}f}{\partial x^{\beta}}\right)+a_2\left(f\frac{\partial^{\beta}g}{\partial x^{\beta}}+g\frac{\partial^{\beta}f}{\partial x^{\beta}}\right)&=0,\nonumber\\
	\frac{^{RL}\partial^{\alpha}g}{\partial t^{\alpha}}+b_0\frac{\partial^{2\beta}g}{\partial x^{2\beta}}+b_1g\left(\frac{\partial^{\beta}g}{\partial x^{\beta}}\right)+b_2\left(f\frac{\partial^{\beta}g}{\partial x^{\beta}}+g\frac{\partial^{\beta}f}{\partial x^{\beta}}\right)&=0,
	\end{align}
	where  $a_0,a_1,a_2,b_0,b_1$ and $b_2(\neq0)$
	are arbitrary constants depending upon the system parameters such as Peclet number, Brownian diffusivity and Stokes velocity of
	particles due to gravity. \\
\noindent Comparing with the system of FPDE (\ref{2.SFPDE}) we conclude that 
\begin{align*}
N_1[f,g]&=-a_0\frac{\partial^{2\beta}f}{\partial x^{2\beta}}-a_1f\left(\frac{\partial^{\beta}f}{\partial x^{\beta}}\right)-a_2\left(f\frac{\partial^{\beta}g}{\partial x^{\beta}}+g\frac{\partial^{\beta}f}{\partial x^{\beta}}\right),\\
N_2[f,g]&=-b_0\frac{\partial^{2\beta}g}{\partial x^{2\beta}}-b_1g\left(\frac{\partial^{\beta}g}{\partial x^{\beta}}\right)-b_2\left(f\frac{\partial^{\beta}g}{\partial x^{\beta}}+g\frac{\partial^{\beta}f}{\partial x^{\beta}}\right),
\end{align*} are the corresponding nonlinear fractional differential operators.\\
Observe that $I=I_1^2\times I_2^2=\mathfrak{L}\{1,x^{\beta}\}\times \mathfrak{L}\{1,x^{\beta}\}$ is invariant under the operator $N[f,g]$ as
\begin{align*}
N_1[k_1+k_2x^{\beta},l_1+l_2x^{\beta}]&=-\Gamma(1+\beta)\left[(a_1k_1k_2+a_2k_1l_2+a_2l_1k_2)-(a_1k_2^2+2a_2k_2l_2)x^{\beta}\right]\in I_1^2,\\
N_2[k_1+k_2x^{\beta},l_1+l_2x^{\beta}]&=-\Gamma(1+\beta)\left[(b_1l_1l_2+b_2k_1l_2+b_2l_1k_2)-(b_1l_2^2+2b_2k_2l_2)x^{\beta}\right]\in I_2^2.
\end{align*}
In view of Theorem \ref{2.THE1}, system (\ref{2.1EX1}) admits solution of the form
\begin{equation}\label{2.1SOL1}
f(t,x)=K_1(t)+K_2(t)x^{\beta}, g(t,x)=L_1(t)+L_2(t)x^{\beta},
\end{equation}where $K_1(t), K_2(t),L_1(t)$ and $L_2(t)$ satisfy the following system of FODEs
\begin{align}
\dfrac{^{RL} d ^\alpha K_1(t)}{ d t^ {\alpha}}&=-\Gamma(1+\beta)[a_1K_1(t)K_2(t)+a_2K_1(t)L_2(t)+a_2L_1(t)K_2(t)],\label{2.1EX1.1}\\
\dfrac{^{RL} d ^\alpha K_2(t)}{ d t^ {\alpha}}&=-\Gamma(1+\beta)[a_1K_2^2(t)+2a_2K_2(t)L_2(t)],\label{2.1EX1.2}\\
\dfrac{^{RL} d ^\alpha L_1(t)}{ d t^ {\alpha}}&=-\Gamma(1+\beta)[b_1L_1(t)L_2(t)+b_2K_1(t)L_2(t)+b_2L_1(t)K_2(t)],\label{2.1EX1.3}\\
\dfrac{^{RL} d ^\alpha L_2(t)}{ d t^ {\alpha}}&=-\Gamma(1+\beta)[b_1L_2^2(t)+2b_2K_2(t)L_2(t)].\label{2.1EX1.4}
\end{align}
Solving Eq. (\ref{2.1EX1.2}) and Eq. (\ref{2.1EX1.4}), we obtain
 \begin{equation}\label{2.1EX1.5}
L_2(t)=M_2t^{-\alpha},~~ K_2(t)=\frac{-b_1}{2b_2}M_2t^{-\alpha}-\frac{\Gamma(1-\alpha)t^{-\alpha}}{2b_2\Gamma(1-2\alpha)\Gamma(1+\beta)},~M_2(\neq 0)~\text{ is arbitrary}.
\end{equation}
Using (\ref{2.1EX1.5}), and solving Eq. (\ref{2.1EX1.1}) and Eq. (\ref{2.1EX1.3}), we deduce the solution of the system of generalized fractional Burger's equations (\ref{2.1EX1}) as
\begin{align*}
f(t,x)&=\frac{-M_1\Gamma(1-\alpha)t^{-\alpha}}{2b_2M_2\Gamma(1-2\alpha)\Gamma(1+\beta)}-\frac{b_1M_1t^{-\alpha}}{2b_2}+\left[\frac{-b_1M_2t^{-\alpha}}{2b_2}-\frac{\Gamma(1-\alpha)t^{-\alpha}}{2b_2\Gamma(1-2\alpha)\Gamma(1+\beta)}\right]x^{\beta},\\
g(t,x)&=M_1t^{-\alpha}+[M_2t^{-\alpha}]x^{\beta}, ~\text{where}~a_1,a_2,b_1,b_2,M_1, M_2(\neq 0)~ \text{are arbitrary}.
\end{align*}\\
Note that in particular, for $\alpha=\beta=1,~a_1=b_1=-2,~ a_0=b_0=-1$ and $a_2=b_2=1$ Eq. (\ref{2.1EX1}) becomes coupled Burger equation \cite{srivastava2014one}.\\
Now consider fractional version of coupled Burger equation. 
	\begin{align}\label{2.1EX1.7}
\frac{^{RL}\partial^{\alpha}f}{\partial t^{\alpha}}&=\frac{\partial^{2\beta}f}{\partial x^{2\beta}}+2f\left(\frac{\partial^{\beta}f}{\partial x^{\beta}}\right)-\left(f\frac{\partial^{\beta}g}{\partial x^{\beta}}+g\frac{\partial^{\beta}f}{\partial x^{\beta}}\right),\nonumber\\
\frac{^{RL}\partial^{\alpha}g}{\partial t^{\alpha}}&=\frac{\partial^{2\beta}g}{\partial x^{2\beta}}+2g\left(\frac{\partial^{\beta}g}{\partial x^{\beta}}\right)-\left(f\frac{\partial^{\beta}g}{\partial x^{\beta}}+g\frac{\partial^{\beta}f}{\partial x^{\beta}}\right),
\end{align}Using  $a_1=b_1=-2,~ a_2=b_2=1$ and Eq. (\ref{2.1EX1.1}) we find the value of $M_2$ as \begin{equation*}
M_2=\frac{-\Gamma(1-\alpha)}{2\Gamma(1-2\alpha)\Gamma(1+\beta)}.
\end{equation*}
Thus
\begin{align}\label{2.1EX1.8}
K_1(t)=2M_1t^{-\alpha}, K_2(t)=\frac{-\Gamma(1-\alpha)t^{-\alpha}}{\Gamma(1-2\alpha)\Gamma(1+\beta)},~L_1(t)=M_1t^{-\alpha}, L_2(t)=\frac{-\Gamma(1-\alpha)t^{-\alpha}}{2\Gamma(1-2\alpha)\Gamma(1+\beta)}.
\end{align}
From (\ref{2.1SOL1}) and (\ref{2.1EX1.8}) we deduce the solution of fractional version of coupled Burger equations (\ref{2.1EX1.7}) as
\begin{align}\label{2.1SOL11}
f(t,x)=2M_1t^{-\alpha}+\left[\frac{-\Gamma(1-\alpha)t^{-\alpha}}{\Gamma(1-2\alpha)\Gamma(1+\beta)}\right]x^{\beta},~~
g(t,x)=M_1t^{-\alpha}+\left[\frac{-\Gamma(1-\alpha)t^{-\alpha}}{2\Gamma(1-2\alpha)\Gamma(1+\beta)}\right]x^{\beta},~M_1\in\mathbb{R}.
\end{align} 
The solution (\ref{2.1SOL11}) is depicted in Fig. 1.
	\begin{figure}[H]
	\centering
	\includegraphics[scale=0.4]{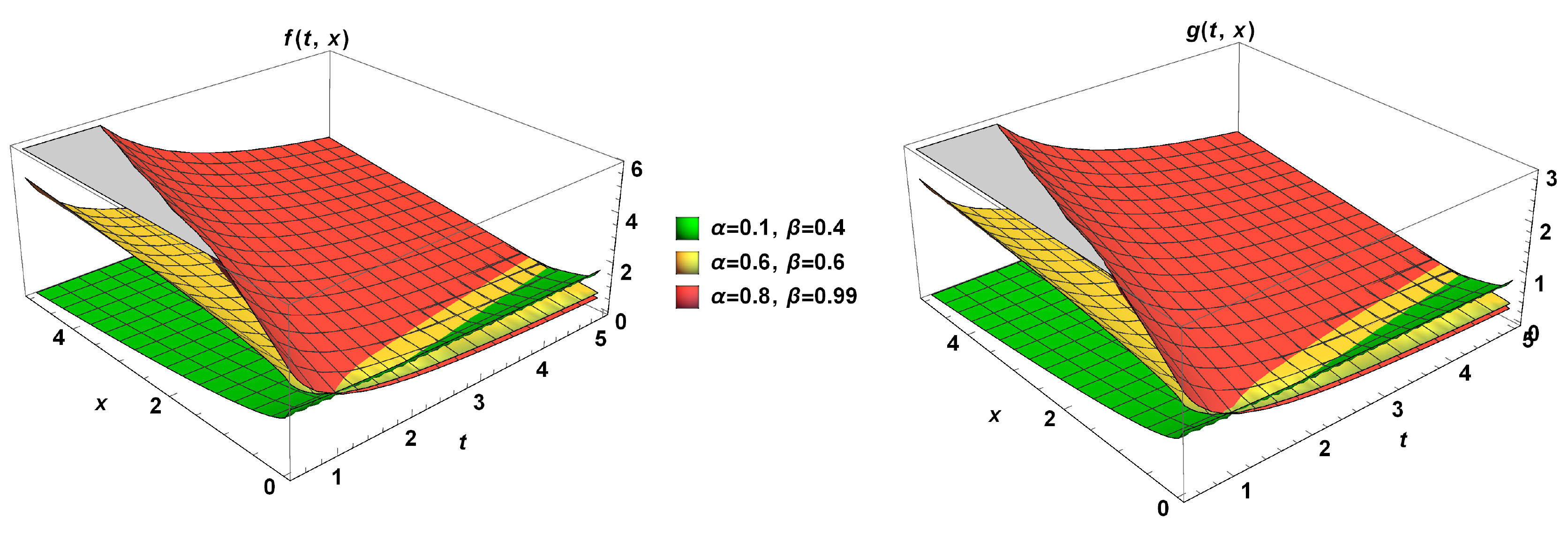}
	\caption{Plot of $f(t,x)$ and $g(t,x)$ for $M_1=1$, and different values of $\alpha$ and $~\beta.$}
\end{figure}
\subsubsection{Exact solution of coupled FPDEs}
	Consider the following system of nonlinear FPDEs for $t>0, 0< \alpha_1,~\alpha_2,~\beta \leq 1.$
\begin{align}\label{2.1EX2}
	\frac{\partial^{\alpha_1}f}{\partial t^{\alpha_1}}+\frac{\partial^{\alpha_1+1}f}{\partial t^{\alpha_1+1}}&=\frac{\partial^{2\beta}f}{\partial x^{2\beta}}+m_1\left(g\frac{\partial^{\beta}g}{\partial x^{\beta}}\right)+a_1m_1g^2,\nonumber \\
	\frac{\partial^{\alpha_2}g}{\partial t^{\alpha_2}}+\frac{\partial^{\alpha_2+1}g}{\partial t^{\alpha_2+1}}&=\frac{\partial^{2\beta}g}{\partial x^{2\beta}}+ n_1\frac{\partial^{2\beta}f}{\partial x^{2\beta}}+a_2^2n_1f+n_2g,
\end{align}	where $a_1,a_2,m_1,m_2,n_1$ and $n_2$ are arbitrary constants. \\
In view of (\ref{2.SFPDE}),
\begin{align*}
N_1[f,g]&=\frac{\partial^{2\beta}f}{\partial x^{2\beta}}+m_1\left(g\frac{\partial^{\beta}g}{\partial x^{\beta}}\right)+a_1m_1g^2,\\
N_2[f,g]&=\frac{\partial^{2\beta}g}{\partial x^{2\beta}}+ n_1\frac{\partial^{2\beta}f}{\partial x^{2\beta}}+a_2^2n_1f+n_2g.
\end{align*}
Clearly $I=I_1^2\times I_2^1=\mathfrak{L}\{\sin_{\beta}(a_2x^{\beta}),\cos_{\beta}(a_2x^{\beta})\}\times \mathfrak{L}\{E_{\beta}(-a_1x^{\beta})\}$ is an invariant subspace under $N[f,g]$ as
\begin{align*}
N_1\left[k_1\sin_{\beta}(a_2x^{\beta})+k_2\cos_{\beta}(a_2x^{\beta}),l_1E_{\beta}(-a_1x^{\beta})\right]&=-k_1a_2^2\sin_{\beta}(a_2x^{\beta})-k_2a_2^2\cos_{\beta}(a_2x^{\beta})\in I_1^2,\\
N_2\left[k_1\sin_{\beta}(a_2x^{\beta})+k_2\cos_{\beta}(a_2x^{\beta}),l_1E_{\beta}(-a_1x^{\beta})\right]&=(a_1^2+n_2)l_1E_{\beta}(-a_1x^{\beta})\in I_2^1.
\end{align*}
Thus Theorem \ref{2.THE1} implies that the system (\ref{2.1EX2}) admits solution of the form
\begin{equation}\label{2.1SOL2.1}
f(t,x)=K_1(t)\sin_{\beta}(a_2x^{\beta})+K_2(t)\cos_{\beta}(a_2x^{\beta}), ~g(t,x)=L_1(t)E_{\beta}(-a_1x^{\beta}),
\end{equation}where $K_1(t), K_2(t)$ and $L_1(t)$ satisfy the system of FODEs
\begin{align}
\dfrac{ d ^{\alpha_1} K_1(t)}{ d t^ {\alpha_1}}+\dfrac{ d ^{\alpha_1+1} K_1(t)}{ d t^ {\alpha_1+1}}&=-a_2^2K_1(t),\label{2.1EX2.1}\\
\dfrac{ d ^{\alpha_1} K_2(t)}{ d t^ {\alpha_1}}+\dfrac{ d ^{\alpha_1+1} K_2(t)}{ d t^ {\alpha_1+1}}&=-a_2^2K_2(t),\label{2.1EX2.2}\\
\dfrac{ d ^{\alpha_2} L_1(t)}{ d t^ {\alpha_2}}+\dfrac{ d ^{\alpha_2+1} L_1(t)}{ d t^ {\alpha_2+1}}&=(a_1^2+n_2)L_1(t)=aL_1(t), ~~~~a=(a_1^2+n_2).\label{2.1EX2.3}
\end{align}
We apply Laplace transform to Eq. (\ref{2.1EX2.1}) and obtain
\begin{align*}
\tilde{K}_1(s)&=\frac{K_1(0)s^{\alpha_1}}{s^{\alpha_1}+s^{\alpha_1 +1}+a_2^2}+\frac{[K_1(0)+K_1'(0)]s^{\alpha_1-1}}{s^{\alpha_1}+s^{\alpha_1 +1}+a_2^2}\\
&=K_1(0)\sum_{m=0}^{\infty}\frac{(-1)^ma_2^{2m}s^{-\alpha_1m}}{(s+1)^{m+1}}+\left[K_1(0)+K_1'(0)\right]\sum_{m=0}^{\infty}\frac{(-1)^ma_2^{2m}s^{-\alpha_1m-1}}{(s+1)^{m+1}},~\mathcal{R}e(s)>1.
\end{align*}
Taking inverse Laplace transform and using the relation (\ref{2.P1})
\begin{align*}
K_1(t)=K_1(0)\sum_{m=0}^{\infty}\frac{(-1)^m}{m!}a_2^{2m}t^{(\alpha_1+1)m}E_{1,{\alpha}_1m+1}^{(m)}(-t)+\left[K_1(0)+K_1'(0)\right]\sum_{m=0}^{\infty}\frac{(-1)^m}{m!}a_2^{2m}t^{(\alpha_1+1)m+1}E_{1,{\alpha_1}m+2}^{(m)}(-t).
\end{align*}
Proceeding on similar lines we evaluate $K_2(t)$ and $L_2(t)$. Hence an exact solution of the system (\ref{2.1EX2}) is
\begin{align*}
f(t,x)&=\left[K_1(0)\sum_{m=0}^{\infty}\frac{(-1)^m}{m!}a_2^{2m}t^{(\alpha_1+1)m}E_{1,{\alpha}_1m+1}^{(m)}(-t)+\left[K_1(0)+K_1'(0)\right]\right.\\
&\left.\sum_{m=0}^{\infty}\frac{(-1)^m}{m!}a_2^{2m}t^{(\alpha_1+1)m+1}E_{1,{\alpha_1}m+2}^{(m)}(-t)\right]\sin_{\beta}(a_2x^{\beta})+\left[K_2(0)\sum_{m=0}^{\infty}\frac{(-1)^m}{m!}a_2^{2m}t^{(\alpha_1+1)m}\right.\\
&\times \left.E_{1,{\alpha}_1m+1}^{(m)}(-t)+\left[K_2(0)+K_2'(0)\right]\sum_{m=0}^{\infty}\frac{(-1)^m}{m!}a_2^{2m}t^{(\alpha_1+1)m+1}E_{1,{\alpha_1}m+2}^{(m)}(-t)\right]\cos_{\beta}(a_2x^{\beta}),\\
g(t,x)&=\left[L_1(0)\sum_{m=0}^{\infty}\frac{a^{m}}{m!}
t^{(\alpha_2+1)m}E_{1,{\alpha}_2m+1}^{(m)}(-t)+\left[L_1(0)+L_1'(0)\right]\right.\\
&\times \left.\sum_{m=0}^{\infty}\frac{a^{m}}{m!}t^{(\alpha_2+1)m+1}E_{1,{\alpha_2}m+2}^{(m)}(-t)\right]E_{\beta}(-a_1x^{\beta}),
\end{align*}where $a=a_1^2+n_2$ and $a_1,a_2,n_2$ are arbitrary.

\subsubsection{Coupled fractional Boussinesq equations}
Fractional version of coupled Boussinesq equations along with initial conditions for $t>0,~0< \alpha_1,~\alpha_2, ~\beta \leq 1$ is
\begin{align}
&\frac{\partial^{\alpha_1}f}{\partial t^{\alpha_1}}=\frac{-\partial^{\beta}g}{\partial x^{\beta}}=N_1[f,g],\label{2.1EX30}\\
&\frac{\partial^{\alpha_2}g}{\partial t^{\alpha_2}}=-m_1\frac{\partial^{\beta}f}{\partial x^{\beta}}+3f\left(\frac{\partial^{\beta}f}{\partial x^{\beta}}\right)+ m_2\frac{\partial^{3\beta}f}{\partial x^{3\beta}}=N_2[f,g],\label{2.1EX31}\\
&f(0,x)= e+2x^{\beta},~g(0,x)=\frac{3}{2},\label{2.1IC3}
\end{align}	where $m_1$, $m_2$ are arbitrary constants.\\
$I=I_1^2\times I_2^2=\mathfrak{L}\{1,x^{\beta}\}\times \mathfrak{L}\{1,x^{\beta}\}$ is invariant subspace w.r.t. the operator $N[f,g]$ as
\begin{align*}
N_1[k_1+k_2x^{\beta},l_1+l_2x^{\beta}]&=-\Gamma(1+\beta)l_2\in I_1^2,\\
N_2[k_1+k_2x^{\beta},l_1+l_2x^{\beta}]&=\Gamma(1+\beta)\left[-m_1k_2+3k_1k_2\right]+3\Gamma(1+\beta)k_2^2x^{\beta}\in I_2^2.
\end{align*}
Hence using Theorem \ref{2.THE1}, system (\ref{2.1EX30})-(\ref{2.1EX31}) has the following solution
\begin{equation}\label{2.1SOL3}
f(t,x)=K_1(t)+K_2(t)x^{\beta}, g(t,x)=L_1(t)+L_2(t)x^{\beta},
\end{equation}where the unknowns functions $K_1(t), K_2(t),L_1(t)$ and $L_2(t)$ satisfy the following system of FODEs
\begin{align}
\dfrac{ d ^{\alpha_1} K_1(t)}{ d t^ {\alpha_1}}&=-\Gamma(1+\beta)L_2(t),\label{2.1EX3.1}\\
\dfrac{ d ^{\alpha_1} K_2(t)}{ d t^ {\alpha_1}}&=0,\label{2.1EX3.2}\\
\dfrac{ d ^{\alpha_2} L_1(t)}{ d t^ {\alpha_2}}&=\Gamma(1+\beta)\left[-m_1K_2(t)+3K_1(t)K_2(t)\right],\label{2.1EX3.3}\\
\dfrac{ d ^{\alpha_2} L_2(t)}{ d t^ {\alpha_2}}&=3\Gamma(1+\beta)K_2^2(t).\label{2.1EX3.4}
\end{align}
Eq. (\ref{2.1EX3.2}) implies that $K_2(t)=b$ (constant). Hence substituting value of $K_2(t)$ in Eq. (\ref{2.1EX3.4}) and performing fractional integration on both sides we obtain
 $L_2(t)=L_2(0)+3b^2\dfrac{\Gamma(1+\beta)t^{\alpha_2}}{\Gamma(1+\alpha_2)}$. Proceeding on similar lines solution (\ref{2.1SOL3}) takes the following form
\begin{align}\label{2.1SOL31}
f(t,x)&=\left[a-d\frac{\Gamma(1+\beta)}{\Gamma(1+\alpha_1)}t^{\alpha_1}-3b^2\frac{\Gamma(1+\beta)^2t^{\alpha_1+\alpha_2}}{\Gamma(1+\alpha_1+\alpha_2)}\right]+bx^{\beta},\nonumber\\
g(t,x)&=\left[c-m_1b\frac{\Gamma(1+\beta)}{\Gamma(1+\alpha_2)}t^{\alpha_2}+3ba\frac{\Gamma(1+\beta)}{\Gamma(1+\alpha_2)}t^{\alpha_2}-3bd\frac{\Gamma(1+\beta)^2t^{\alpha_1+\alpha_2}}{\Gamma(1+\alpha_1+\alpha_2)}-9b^3\frac{\Gamma(1+\beta)^3t^{\alpha_1+2\alpha_2}}{\Gamma(1+\alpha_1+2\alpha_2)}\right]\nonumber\\
&+\left[d+3b^2\dfrac{\Gamma(1+\beta)t^{\alpha_2}}{\Gamma(1+\alpha_2)}\right]x^{\beta},~a,b, c~\text{and}~d~\text{are arbitrary.}
\end{align}
\textbf{Note:} In particular for $\alpha_1= \alpha_2=\alpha,$ and $ \beta=1$ the solution has the form 
\begin{align*}
f(t,x)&=\left[a-\frac{dt^{\alpha}}{\Gamma(1+\alpha)}-\frac{3b^2t^{2\alpha}}{\Gamma(1+2\alpha)}\right]+bx,\\
g(t,x)&=\left[c-\frac{m_1bt^{\alpha}}{\Gamma(1+\alpha)}+\frac{3bat^{\alpha}}{\Gamma(1+\alpha)}-\frac{3bdt^{2\alpha}}{\Gamma(1+2\alpha)}-\frac{9b^3t^{3\alpha}}{\Gamma(1+3\alpha)}\right]+\left[d+\dfrac{3b^2t^{\alpha}}{\Gamma(1+\alpha)}\right]x.
\end{align*}This is a solution of the time fractional coupled Boussinesq equation obtained by Sahadevan \textit{et. al} \cite{sahadevan2017exact}. Exact solution of the system (\ref{2.1EX30})-(\ref{2.1EX31}) along with the initial conditions (\ref{2.1IC3}) is
\begin{align}\label{2.1SOL32}
f(t,x)&=\left[e-\frac{12\Gamma(1+\beta)^2}{\Gamma(1+\alpha_1+\alpha_2)}t^{\alpha_1+\alpha_2}\right]+2x^{\beta},\nonumber\\
g(t,x)&=\left[\dfrac{3}{2}-\frac{2m_1\Gamma(1+\beta)}{\Gamma(1+\alpha_2)}t^{\alpha_2}+\frac{6e\Gamma(1+\beta)}{\Gamma(1+\alpha_2)}t^{\alpha_2}-\frac{72\Gamma(1+\beta)^3t^{\alpha_1+2\alpha_2}}{\Gamma(1+\alpha_1+2\alpha_2)}\right]+\left[\dfrac{12\Gamma(1+\beta)}{\Gamma(1+\alpha_2)}t^{\alpha_2}\right]x^{\beta}.
\end{align}
The solution (\ref{2.1SOL32}) is plotted in Fig. 2.
	\begin{figure}[H]
	\centering
	\includegraphics[scale=0.4]{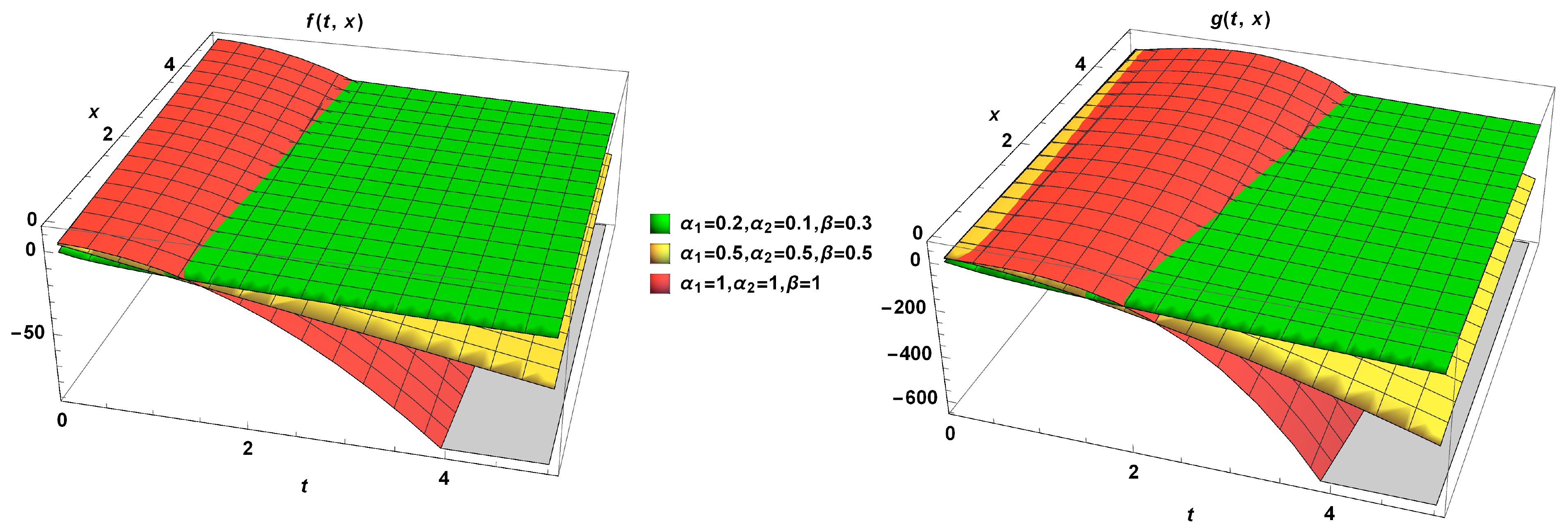}
	\caption{Plot of the solution (\ref{2.1SOL32}) for various values of $\alpha_1,~\alpha_2$ and $\beta.$}
\end{figure}
\subsubsection{Exact solution of fractional version of system of KdV type of equations}
	Consider fractional KdV type of fractional coupled equations for $t>0,~\alpha \in (0,1]\backslash\{\frac{1}{2}\},~\beta\in (0,1].$
		\begin{align}\label{2.1EX4}
			\frac{^{RL}\partial^{\alpha}f}{\partial t^{\alpha}}&=a_1\left(f\frac{\partial^{\beta}f}{\partial x^{\beta}}\right)+a_2\left(g\frac{\partial^{\beta}g}{\partial x^{\beta}}\right)+a_3\frac{\partial^{3\beta}f}{\partial x^{3\beta}},\nonumber\\
			\frac{^{RL}\partial^{\alpha}g}{\partial t^{\alpha}}&=b_1\left(f\frac{\partial^{\beta}g}{\partial x^{\beta}}\right)+b_2\left(g\frac{\partial^{\beta}f}{\partial x^{\beta}}\right)+b_3\frac{\partial^{3\beta}g}{\partial x^{3\beta}}.
		\end{align}Here $a_1, a_2, a_3, b_1, b_2$ and $b_3$ are arbitrary constants such that $b>a_1$ and $a_2\geq 0.$ Note that for $\alpha=1=\beta,$ and
		\begin{itemize}
			\item When $a_1,a_2,a_3,b_1,b_3$ are arbitrary constants and $b_2=0$ then the coupled system (\ref{2.1EX4}) is the coupled KdV system given in ref. \cite{sahadevan2017lie}.
			\item When $a_1=a_2=b_1=b_2=6,$ and $a_3=1=b_3,$ then the system (\ref{2.1EX4}) is well-known complex coupled KdV system studied in \cite{inan2007exact}.
			\item When $a_1=6a,~ a_2=2b,~a_3=a,~b_1=-3,~b_2=0$ and $b_3=-1,$ ($a,b$ are arbitrary), then the system (\ref{2.1EX4}) reduces to Hirota-Satsuma (HS)-KdV system proposed by Hirota ans Satsuma in 1981 to model interactions of two long waves with different dispersion relations \cite{hirota1981soliton}.
			\item When $a_1=6,a_2=2,a_3=1, b_1=b_2=2$ and $b_3=0,$ then the KdV type system (\ref{2.1EX4}) is treated as Ito type coupled KdV system \cite{ito1982symmetries}.
		\end{itemize} Further note when $g=0$, and $a_1=6, a_2=0,a_3=-1$ the fractional KdV system (\ref{2.1EX4}) reduces to fractional KdV equation studied by Choudhary and Daftardar-Gejji \cite{choudhary2017invariant}.\\In system (\ref{2.1EX4})
	\begin{align*}
	N_1[f,g]&=a_1\left(f\frac{\partial^{\beta}f}{\partial x^{\beta}}\right)+a_2\left(g\frac{\partial^{\beta}g}{\partial x^{\beta}}\right)+a_3\frac{\partial^{3\beta}f}{\partial x^{3\beta}},\\
	N_2[f,g]&=b_1\left(f\frac{\partial^{\beta}g}{\partial x^{\beta}}\right)+b_2\left(g\frac{\partial^{\beta}f}{\partial x^{\beta}}\right)+b_3\frac{\partial^{3\beta}g}{\partial x^{3\beta}}.
	\end{align*}
	Note that $I=I_1^2\times I_2^2=\mathfrak{L}\{1,x^{\beta}\}\times \mathfrak{L}\{1,x^{\beta}\}$ is invariant subspace with respect to the operator $N[f,g]$ since
		\begin{align*}
		N_1[k_1+k_2x^{\beta},l_1+l_2x^{\beta}]&=\Gamma(1+\beta)\left[a_1k_1k_2+a_2l_1l_2\right]+\Gamma(1+\beta)\left[a_1k_2^2+a_2l_2^2\right]x^{\beta} \in I_1^2,\\
			N_2[k_1+k_2x^{\beta},l_1+l_2x^{\beta}]&=\Gamma(1+\beta)\left[b_1k_1l_2+b_2l_1k_2\right]+\Gamma(1+\beta)\left[(b_1+b_2)k_2l_2\right]x^{\beta}\in I_2^2.
		\end{align*}
		Hence the system (\ref{2.1EX4}) admits solution of the form
		\begin{equation}\label{2.1SOL4}
		f(t,x)=K_1(t)+K_2(t)x^{\beta}, g(t,x)=L_1(t)+L_2(t)x^{\beta},
		\end{equation}where the unknown functions $K_1(t), K_2(t),L_1(t)$ and $L_2(t)$ satisfy the following system of FODEs.
		\begin{align}
		\dfrac{^{RL} d ^\alpha K_1(t)}{ d t^ {\alpha}}&=\Gamma(1+\beta)\left[a_1K_1(t)K_2(t)+a_2L_1(t)L_2(t)\right],\label{2.1EX4.1}\\
		\dfrac{^{RL} d ^\alpha K_2(t)}{ d t^ {\alpha}}&=\Gamma(1+\beta)[a_1K_2^2(t)+a_2L_2^2(t)],\label{2.1EX4.2}\\
		\dfrac{^{RL} d ^\alpha L_1(t)}{ d t^ {\alpha}}&=\Gamma(1+\beta)[b_1K_1(t)L_2(t)+b_2L_1(t)K_2(t)],\label{2.1EX4.3}\\
		\dfrac{^{RL} d ^\alpha L_2(t)}{ d t^ {\alpha}}&=\Gamma(1+\beta)[bK_2(t)L_2(t)],~b~=(b_1+b_2).\label{2.1EX4.4}
		\end{align}
	Solving system (\ref{2.1EX4.1})-(\ref{2.1EX4.4}), we deduce the following solution of system (\ref{2.1EX4}):\\
	For $\alpha=1,$
		\begin{align*}
		f(t,x)&=\frac{-\sqrt{a_2}M_1}{\sqrt{b-a_1}t}-\left(\frac{1}{b\Gamma(1+\beta)t}\right)x^{\beta},\\
		g(t,x)&=\dfrac{M_1}{t}+\left(\frac{\sqrt{b-a_1}}{b\sqrt{a_2}\Gamma(1+\beta)t}\right)x^{\beta},
		\end{align*}
	For $\alpha\in (0,1)\backslash\{\frac{1}{2}\}$,
		\begin{align*}
		f(t,x)&=\frac{\sqrt{a_2}}{\sqrt{b-a_1}}M_1t^{-\alpha}+\left[\frac{\Gamma(1-\alpha)t^{-\alpha}}{b\Gamma(1-2\alpha)\Gamma(1+\beta)}\right]x^{\beta},\\
		g(t,x)&=M_1t^{-\alpha}+\left[\frac{\sqrt{b-a_1}~\Gamma(1-\alpha)t^{-\alpha}}{b\sqrt{a_2}~ \Gamma(1-2\alpha)\Gamma(1+\beta)}\right]x^{\beta},
		\end{align*}
				where $M_1, a_1, a_2, b_1$ and $b_2$ are arbitrary such that $b=b_1+b_2, b>a_1,a_2\geq0.$ \\
	Solution of the system (\ref{2.1EX4}) for $\alpha \in (0,1]\backslash\{\frac{1}{2}\},~\beta\in (0,1]$ is plotted in Fig. 3.
	\begin{figure}[H]
	\centering
	\includegraphics[scale=0.5]{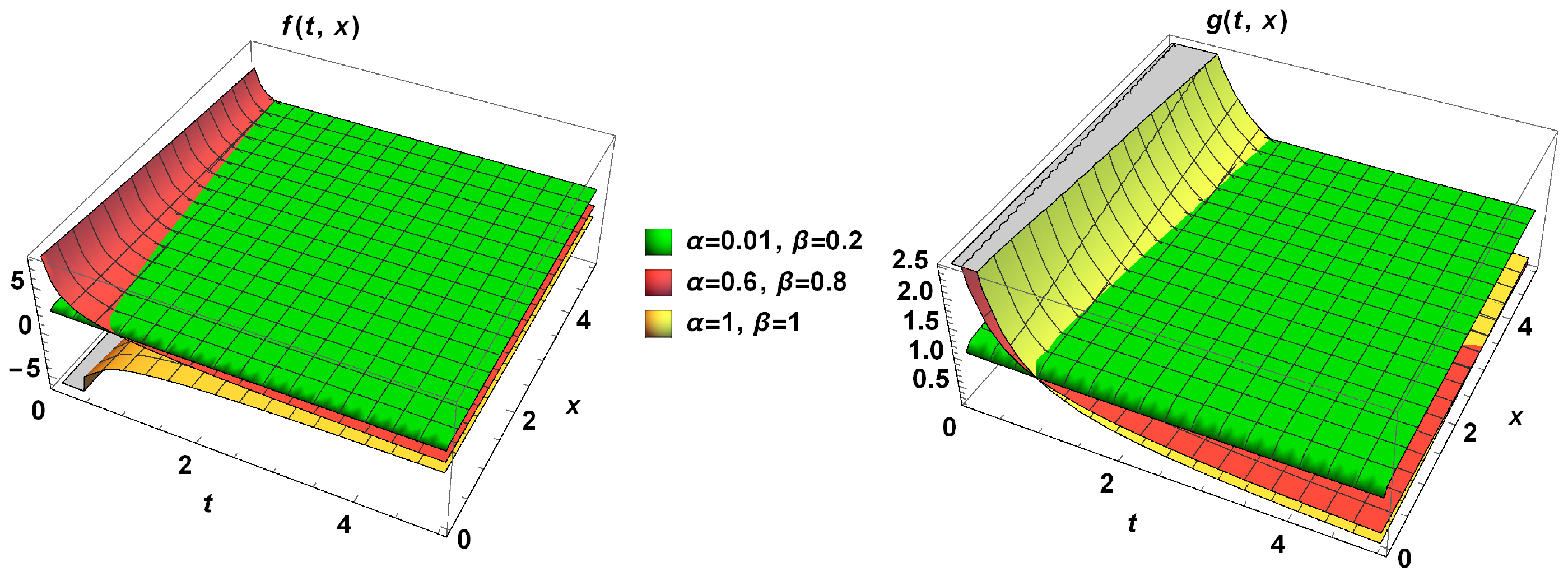}
	\caption{Plot of the solution of the system (\ref{2.1EX4}) for $M_1=1, a_1=2, a_2=4, b=3.$}
\end{figure}
		
\section{FPDEs in (1+n) dimension}
In this section we consider higher dimensional FPDEs of the form
\begin{align}\label{2.HFPDE}
\sum_{i=1}^{r}\lambda _{i}\frac{\partial ^{\gamma_{(i)}}f(t,\bar{x})}{\partial t^{\gamma_{(i)}}}=N^l[f(t,\bar{x})],\end{align}
where$~\bar{x}=(x_1,x_2,\dots,x_n),~l=1,2,~\gamma(i)=i\alpha~ \text{or}~ \gamma(i)=\alpha+i-1.\\
N^1[f(t,\bar{x})]=\hat{N}^1\left [\bar{x}, f,\frac{\partial^{\beta_1}f}{\partial x_1^{\beta_1}}, \dots,\frac{\partial^{\beta_n}f}{\partial x_n^{\beta}}, \frac{\partial^{2\beta_1}f}{\partial x_1^{2\beta_1}},\dots,\frac{\partial^{k\beta{n-1}}f}{\partial x_{n-1}^{k\beta_{n-1}}}, \frac{\partial^{k\beta_n}f}{\partial x_n^{k\beta_n}}\right ],$ and\\
$
N^2[f(t,\bar{x})]=\hat{N}^2\left [\bar{x}, f,\frac{\partial^{\beta_1}f}{\partial x_1^{\beta_1}},\dots, \frac{\partial^{\beta_n}f}{\partial x_n^{\beta_n}},\frac{\partial^{\beta_1+1}f}{\partial x_1^{\beta_1+1}},\dots, \frac{\partial^{\beta_{n-1}+k-1}f}{\partial x_{n-1}^{\beta_{n-1}+k-1}}, \frac{\partial^{\beta_n+k-1}f}{\partial x_n^{\beta_n+k-1}}\right]
$
	are nonlinear fractional differential operators in higher dimensions. Here $\frac{\partial^{\gamma_{(i)}}f(\cdot)}{\partial t^{\gamma_{(i)}}}$ is Caputo (or Riemann-Liouville) derivative with respect to $t$. $\frac{\partial^{j\beta_i}f(\cdot)}{\partial x_i^{j\beta_i}}$ and $\frac{\partial^{\beta_i+j-1}f(\cdot)}{\partial x_i^{\beta_i+j-1}}, i=1,\dots,n,~ j=1,\dots,k ~(k\in \mathbb{N})$ are Caputo derivatives with respect to variable $x_i$. $\lceil \alpha \rceil=s, \lceil \beta_i \rceil=s_i,$ where $s, s_i \in \mathbb{N}, i=1,\dots k$ and $\lambda_i\in \mathbb{R}$.
\subsection{Invariant subspace method FPDEs in higher dimensions}
Let $I^m$ denote the $m$-dimensional linear space over $\mathbb{R}$ spanned by $m$ linearly independent basis functions $\{\phi_j(x_1,x_2,\dots,x_n):~ j=1,\dots,m\}$, \textit{i.e.},
\begin{align*}
	I^m= \mathfrak{L}\{\phi_1(\bar{x}),\phi_2(\bar{x}),\dots,\phi_{m}(\bar{x})\}=\left \{\sum_{j=1}^{m}k_j\phi_j(\bar{x})~ \bigg| ~k_j\in \mathbb{R}, i=1,\dots,m \right \}.
\end{align*}
A finite dimensional linear space $I^m$ is said to be invariant under fractional differential operators $N^l[f(t,\bar{x})] (l=1,2),$ if $N^l[f]\in I^m, \forall f \in I^m.$
\begin{The}\label{2.THE2}
	If a finite dimensional linear space $I^{m}$ is invariant under the operators $N^l[f(t,\bar{x})], l=1,2$, then FPDE (\ref{2.HFPDE}) has a solution of the form
	\begin{equation}\label{2.THE2.1}
	f(t,\bar{x})=\sum_{j=1}^{m}K_j(t)\phi_j(\bar{x}),
	\end{equation}where the coefficient $\{K_j\}'s$ satisfy the following system of FODEs
	\begin{equation}\label{2.THE2.2}
	\sum_{i=1}^{r}\lambda_i\frac{ d^{\gamma(i)}K_j(t)}{ dt^{\gamma(i)}}=\psi_j(K_1(t),K_2(t),\dots,K_m(t)),~j=1,\dots,m.
	\end{equation}Here $~\gamma(i)=i\alpha~ \text{or}~ \gamma(i)=\alpha+i-1$, and $\{\psi_j\}'s$ are the expansion coefficients of $N^l[f(t,\bar{x})], l=1, 2$ with respect to basis function $\{\phi_j\}'s$ of $I^m$.
\end{The}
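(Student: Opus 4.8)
The plan is to mimic, essentially verbatim, the argument used for Theorem \ref{2.THE1}, since Theorem \ref{2.THE2} is the single-equation, $(1+n)$-dimensional counterpart of it. First I would substitute the ansatz $f(t,\bar x)=\sum_{j=1}^m K_j(t)\phi_j(\bar x)$ into the left-hand side of \eqref{2.HFPDE}. Because each $\frac{\partial^{\gamma(i)}}{\partial t^{\gamma(i)}}$ (Caputo or Riemann--Liouville) is linear and acts only on the variable $t$, while each $\phi_j(\bar x)$ is $t$-independent, I can pull the basis functions out and rewrite
\begin{equation*}
\sum_{i=1}^{r}\lambda_i\frac{\partial^{\gamma(i)}f(t,\bar x)}{\partial t^{\gamma(i)}}
=\sum_{j=1}^{m}\left[\sum_{i=1}^{r}\lambda_i\frac{d^{\gamma(i)}K_j(t)}{dt^{\gamma(i)}}\right]\phi_j(\bar x).
\end{equation*}

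Next I would treat the right-hand side. Since $N^l$ acts only through the spatial variables $\bar x=(x_1,\dots,x_n)$ (all the fractional derivatives appearing in $\hat N^l$ are in the $x_i$), the coefficients $K_j(t)$ play the role of scalars when $N^l$ is applied to $f(t,\bar x)=\sum_j K_j(t)\phi_j(\bar x)$. The hypothesis that $I^m$ is invariant under $N^l$ gives, for arbitrary real constants $k_1,\dots,k_m$, an expansion $N^l\big[\sum_j k_j\phi_j(\bar x)\big]=\sum_j\psi_j(k_1,\dots,k_m)\phi_j(\bar x)$ with $\psi_j:\mathbb R^m\to\mathbb R$; specializing $k_j\mapsto K_j(t)$ yields $N^l[f(t,\bar x)]=\sum_{j=1}^m\psi_j(K_1(t),\dots,K_m(t))\phi_j(\bar x)$. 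Substituting both sides into \eqref{2.HFPDE} and collecting terms produces
\begin{equation*}
\sum_{j=1}^{m}\left[\sum_{i=1}^{r}\lambda_i\frac{d^{\gamma(i)}K_j(t)}{dt^{\gamma(i)}}-\psi_j(K_1(t),\dots,K_m(t))\right]\phi_j(\bar x)=0,
\end{equation*}
and finally I would invoke the linear independence of $\{\phi_j(\bar x)\}_{j=1}^m$ to force each bracket to vanish, which is exactly the system \eqref{2.THE2.2}; the form \eqref{2.THE2.1} is then a solution provided the reduced FODE system is solvable.

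I do not anticipate a serious obstacle here: the proof is structurally identical to that of Theorem \ref{2.THE1}, with the product space $I_1^{n_1}\times\cdots\times I_r^{n_r}$ replaced by the single space $I^m$ and the vector unknown replaced by a scalar one. The one point that deserves a careful sentence rather than a routine remark is the interchange step --- pulling $\phi_j(\bar x)$ through the time-fractional operators and, conversely, pulling $K_j(t)$ through the space-fractional operators inside $N^l$ --- which is legitimate precisely because the $t$- and $\bar x$-variables are separated in the ansatz and no operator in \eqref{2.HFPDE} mixes them; this is where the sequential (iterated) interpretation of the mixed fractional partial derivatives, as fixed in Section 2, is used. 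Everything else is linear algebra (linear independence of the basis) and the linearity of fractional differentiation already recorded in the preliminaries.
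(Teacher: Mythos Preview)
Your proposal is correct and follows essentially the same approach as the paper's own proof: substitute the ansatz, use linearity of the time-fractional operators on the left, use invariance of $I^m$ to expand $N^l[f]$ on the right, and then invoke linear independence of the $\phi_j(\bar x)$ to obtain the reduced system of FODEs. The paper's argument is indeed the single-space analogue of the proof of Theorem \ref{2.THE1}, exactly as you anticipated.
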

\begin{proof}
	 Using linearity of fractional derivatives and Eq. (\ref{2.THE2.1}), L.H.S of FPDE (\ref{2.HFPDE}) reduces to 
	\begin{equation}\label{2.LHS2.1}
	\sum_{i=1}^{r}\lambda_i\frac{\partial ^{\gamma(i)}f(t,\bar{x})}{\partial t^{\gamma(i)}}=\sum_{i=1}^{r}\lambda_i\frac{\partial^{\gamma(i)}}{\partial t^{\gamma(i)}}\left(\sum_{j=1}^{m}K_j(t) \phi_j(\bar{x})\right)=\sum_{j=1}^{m}\left[\sum_{i=1}^{r}\lambda_i\frac{ d^{\gamma(i)}K_j(t)}{ dt^{\gamma(i)}}\right] \phi_j(\bar{x}).
	\end{equation}	
	Further as $I^m$ is an invariant space under the operator $N^l[f]$, there exist $m$ linearly independent functions $\phi_1(\bar{x}),\phi_2(\bar{x}),\dots,\phi _m(\bar{x})$ such that \begin{equation}\label{2.IN2.1}
N^l\left[\sum_{j=1}^{m}k_j \phi_j(\bar{x})\right]=\sum_{j=1}^{m}\psi_j(k_1,k_2,\dots,k_m)\phi_j(\bar{x}),~\text{for}~ k_j\in \mathbb{R}, l=1,2,\end{equation}
	where $\{\psi_j\}'s$ are expansion coefficients of $N^l[f]\in I^{m}$ with respect to the basis $\{\phi_j\}_{j=1}^{m}$.\\
	In view of Eq. (\ref{2.THE2.1}) and Eq. (\ref{2.IN2.1})
	\begin{equation}\label{2.RHS2.1}
N^l[f(t,\bar{x})]=N^l\left[\sum_{j=1}^{m}K_j(t)\phi_j(\bar{x})\right]=\sum_{j=1}^{m}\psi_j(K_1(t),\dots,K_m(t))\phi_j(\bar{x}), l=1,2.
	\end{equation}
	Substituting Eq. (\ref{2.LHS2.1}) and Eq. (\ref{2.RHS2.1}) in Eq. (\ref{2.HFPDE}), we get
	\begin{equation}\label{2.LHS=RHS2}
	\sum_{j=1}^{m}\left[\sum_{i=1}^{r}\lambda_i\frac{ d^{\gamma(i)}K_j(t)}{ dt^{\gamma(i)}}-\psi_j(K_1(t),K_2(t),\dots,K_m(t))\right]\phi_j(\bar{x})=0.\end{equation}
	Using Eq. (\ref{2.LHS=RHS2}) and the fact that $\{\phi_j\}$'s are basis functions, we get the following  system of FODEs
	\begin{equation*}
	\sum_{i=1}^{r}\lambda_i\frac{ d^{\gamma(i)}K_j(t)}{ dt^{\gamma(i)}}=\psi_j(K_1(t),K_2(t),\dots,K_m(t)),~j=1,\dots,m,
	\end{equation*}
where$	~\gamma(i)=i\alpha~ \text{or}~ \gamma(i)=\alpha+i-1.$
\end{proof}
\subsection{Illustrative examples for FPDEs in higher dimensions}
\subsubsection{Fractional dispersive KdV equation in (1+n) dimensions.}
	Consider the following linear fractional dispersive KdV equation
	\begin{equation}\label{2.2EX1}
	\dfrac{\partial ^{\alpha}f}{\partial t^{\alpha}}+\frac{\partial ^{3\beta_1}f}{\partial x_1^{3\beta_1}}+\frac{\partial ^{3\beta_2}f}{\partial x_2^{3\beta_2}}+\cdots+\frac{\partial ^{3\beta_n}f}{\partial x_n^{3\beta_n}}=0,~t>0,~\beta_1,~\beta_2, \cdots,~\beta_n \in (0,1].
	\end{equation}
	\noindent In view of FPDE (\ref{2.HFPDE}), we note that  $N[f]=-\frac{\partial ^{3\beta_1}f}{\partial x_1^{3\beta_1}}-\cdots-\frac{\partial ^{3\beta_n}f}{\partial x_n^{3\beta_n}}$. Observe that when\\
	 $I^{2n}=\mathfrak{L}\{\cos_{\beta_1}(\lambda_1 x_1^{\beta_1}),\sin_{\beta_1}(\lambda_1 x_1^{\beta_1}),\cos_{\beta_2}(\lambda_2 x_2^{\beta_2}),\sin_{\beta_2}(\lambda_2 x_2^{\beta_2}),\cdots,\cos_{\beta_n}(\lambda_n x_n^{\beta_n}),\sin_{\beta_n}(\lambda_n x_n^{\beta_n})\}$,
	\begin{align*}
	N\left[\sum_{i=1}^{n}\left( k_{i1}\cos_{\beta_i}(\lambda_i x_i^{\beta_i})+k_{i2}\sin_{\beta_i}(\lambda_i x_i^{\beta_i})\right)\right]=\sum_{i=1}^{n}\left(-\lambda_i^3k_{i1}\sin_{\beta_i}(\lambda_i x^{\beta_i})+\lambda_i^3k_{i2}\cos_{\beta_i}(\lambda_i x^{\beta_i})\right)\in I^{2n}.
	\end{align*} Hence $I^{2n}$ is an invariant subspace of fractional operator $N[f]$. Hence Theorem (\ref{2.THE2}) implies an exact solution of the form
	\begin{equation}\label{2.2SOL1}
	f(t,\bar{x})=\sum_{i=1}^{n}\left( k_{i1}(t)\cos_{\beta_i}(\lambda_i x_i^{\beta_i})+k_{i2}(t)\sin_{\beta_i}(\lambda_i x_i^{\beta_i})\right),~\lambda_i,~i=1,\dots,n~\text{are distinct.}	\end{equation} where $K_{i1}(t)$ and $ K_{i2}(t),~i=1,\dots,n$ are the unknown functions to be determined by solving the following system of FODEs:
	\begin{multicols}{2}
		\begin{align}
		\dfrac{ d ^{\alpha} K_{11}(t)}{ d t^ {\alpha}}&=\lambda_1^3K_{12}(t),\label{2.2EX1.1}\\
		\dfrac{ d ^{\alpha} K_{12}(t)}{ d t^ {\alpha}}&=-\lambda_1^3K_{11}(t),\label{2.2EX1.2}\\
		\dfrac{ d ^{\alpha} K_{21}(t)}{ d t^ {\alpha}}&=\lambda_2^3K_{22}(t),\label{2.2EX1.3}\\
		\dfrac{ d ^{\alpha} K_{22}(t)}{ d t^ {\alpha}}&=-\lambda_2^3K_{21}(t),
		\end{align}
		
		\columnbreak
		\begin{align}
		&\vdots\nonumber\\
		&\vdots\nonumber\\
			\dfrac{ d ^{\alpha} K_{n1}(t)}{ d t^ {\alpha}}&=\lambda_2^3K_{n2}(t),\\
			\dfrac{ d ^{\alpha} K_{n2}(t)}{ d t^ {\alpha}}&=-\lambda_2^3K_{n1}(t).\label{2.2EX1.4}
			\end{align}
\end{multicols}
	After solving the system $(\ref{2.2EX1.1})-(\ref{2.2EX1.4}),$ we get
\begin{align}\label{2.2EX1.5}
K_{i1}(t)=a_i\sin_{\alpha}(\lambda_i^3t^{\alpha}),~ K_{i2}(t)=a_i\cos_{\alpha}(\lambda_i^3t^{\alpha}).
\end{align}
From (\ref{2.2SOL1}) and (\ref{2.2EX1.5}) we find the following exact solution of system (\ref{2.2EX1}) as
\begin{align}\label{2.2SOL11}
f(t,\bar x)=\sum_{i=1}^{n}\left(a_i\sin_{\alpha}(\lambda_i^3t^{\alpha})\cos_{\beta_i}(\lambda_i x_i^{\beta_i})+a_i\cos_{\alpha}(\lambda_i^3t^{\alpha})\sin_{\beta_i}(\lambda_i x_i^{\beta_i})\right),
\end{align}where $a_i$ and $\lambda_i$
are arbitrary constants for $i=1,2,\dots,n$.\\
The solution (\ref{2.2SOL11}) for $n=2$ is depicted in Fig. 4.
	\begin{figure}[H]
	\centering
	\includegraphics[scale=0.5]{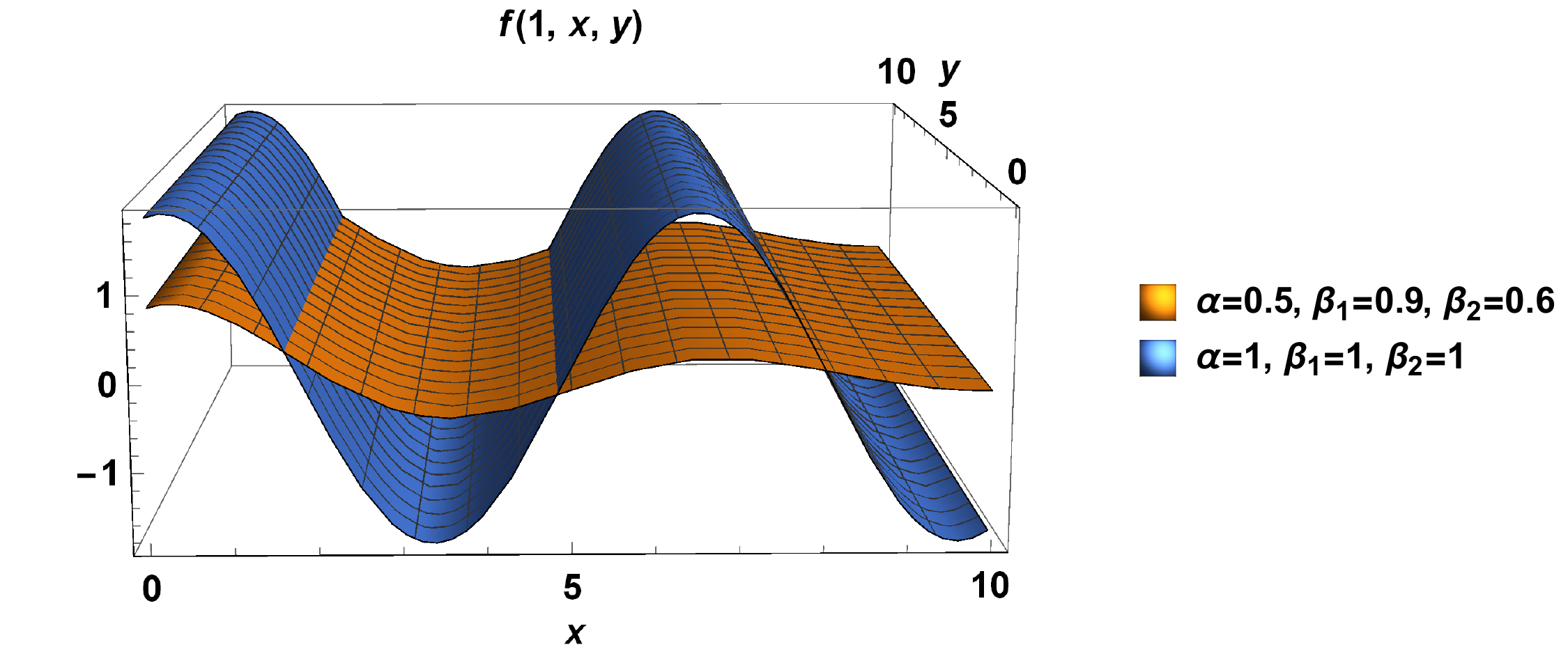}
	\caption{Plot of $f(t,\bar{x})$ of Eq. (\ref{2.2EX1}) for $n=2, a_1=a_2=\lambda_1=1$ and $ \lambda_2=2$ at $t=1$.}
\end{figure}
\textbf{Note:} The fractional dispersion KdV equation admits another invariant subspace
$I^{n}=\mathfrak{L}\{E_{\beta_1}(\lambda_1x_1^{\beta_1}),E_{\beta_2}(\lambda_2x_2^{\beta_2}),\cdots,E_{\beta_n}(\lambda_nx_n^{\beta_n})\}$, leading to following distinct solution:
\begin{equation*}
f(t,\bar x)=\sum_{i=1}^{n}\left[a_iE_{\alpha}(\lambda_i^3t^{\alpha})\right]E_{\beta_i}(\lambda_ix_i^{\beta_i}),~a_i \in \mathbb{R}~\text{for}~i=1,\dots,n.
\end{equation*}
\subsubsection{Fractional version of (1+2) dimensional population model}
	We discuss two dimensional nonlinear FPDE for the population density $f$.
	\begin{equation}\label{2.2EX2}
	\frac{\partial ^{\alpha}f}{\partial t^{\alpha}}=\frac{\partial ^{\beta}}{\partial x^{\beta}}\left(\frac{\partial ^{\beta}f^2}{\partial x^{\beta}}\right)+\frac{\partial ^{\gamma}}{\partial y^{\gamma}}\left(\frac{\partial ^{\gamma}f^2}{\partial y^{\gamma}}\right)+\psi(f),~\alpha, ~\beta,~ \gamma \in (0,1].
	\end{equation}
Note that for $\alpha=\beta=\gamma=1$ and 
\begin{itemize}
	\item $\psi(f)=c f,$ $c\in \mathbb{R}$, the population model $(\ref{2.2EX2})$ follows Malthusian law \cite{gurtin1977diffusion},
	\item $\psi(f)=f(c_1-c_2 f), c_1,c_2\in \mathbb{R}$, Eq. (\ref{2.2EX2}) satisfies Verhulst law \cite{gurtin1977diffusion}.
\end{itemize}
Here we consider $\psi(f)=c f$. Hence $N[f(t,x,y)]=\dfrac{\partial ^{\beta}}{\partial x^{\beta}}\left(\dfrac{\partial ^{\beta}f^2}{\partial x^{\beta}}\right)+\dfrac{\partial ^{\gamma}}{\partial y^{\gamma}}\left(\dfrac{\partial ^{\gamma}f^2}{\partial y^{\gamma}}\right)+c f$ is the fractional nonlinear operator. $I^3=\mathfrak{L}\{1,x^\beta,y^{\gamma}\}$ is invariant under $N[f]$ since
\begin{align*}
N[k_1+k_2x^{\beta}+k_3y^{\gamma}]=ck_1+\Gamma(2\beta+1)k_2^2+\Gamma(2\gamma+1)k_3^2+ck_2x^{\beta}+ck_3y^{\gamma} \in I^3.
\end{align*}
In view of Theorem \ref{2.THE2}, solution of the equation under consideration (\ref{2.1EX2}) is
\begin{equation}\label{2.2SOL2}
f(t,x,y)=K_1(t)+K_2(t)x^{\beta}+K_3(t)y^{\gamma},
\end{equation}where $K_1(t), K_2(t)$ and $K_3(t)$ satisfy the following set of equations:
\begin{align}
\dfrac{ d ^{\alpha} K_1(t)}{ d t^ {\alpha}}&=cK_1(t)+\Gamma(2\beta+1)K_2(t)^2+\Gamma(2\gamma+1)K_3(t)^2,\label{2.2EX2.1}\\
\dfrac{ d ^{\alpha} K_2(t)}{ d t^ {\alpha}}&=c K_2(t),\label{2.2EX2.2}\\
\dfrac{ d ^{\alpha} K_3(t)}{ d t^ {\alpha}}&=c K_3(t)\label{2.2EX2.3}.
\end{align}
Solving Eq. (\ref{2.2EX2.2}) and Eq. (\ref{2.2EX2.3}) using Laplace transform technique, we obtain
\begin{equation*}
K_2(t)=a_2E_\alpha(ct^\alpha),~ K_3(t)=a_3E_\alpha(ct^\alpha),~~a_2, a_3~ \text{are arbitrary}.
\end{equation*}Substituting the obtained values of $K_2(t)$ and $K_3(t)$ in Eq. (\ref{2.2EX2.1}), we get
\begin{equation}\label{2.2EX2.4}
K_1(t)=cK_1(t)+A\left[E_\alpha(ct^\alpha)\right]^2,~~A=a_2^2\Gamma(2\beta+1)+a_3^2\Gamma(2\gamma+1).
\end{equation}We apply NIM \cite{bhalekar2008new} to solve Eq. (\ref{2.2EX2.4}). 
Applying $I^\alpha$ to both sides of Eq. (\ref{2.2EX2.4}), we obtain the following integral equation
\begin{align*}
K_1(t)=a_1+AI^{\alpha}\left[E_\alpha(ct^\alpha)\right]^2+M[K_1(t)],~\text{where}~M[K_1(t)]=cI^{\alpha}K_1(t).
\end{align*}
Let
\begin{align*}
K_1^0(t)&=a_1+AI^{\alpha}\left[E_\alpha(ct^\alpha)\right]^2,\\
K_1^1(t)&=M[K_1^0(t)]=\frac{a_1ct^{\alpha}}{\Gamma(\alpha+1)}+AcI^{2\alpha}\left[E_\alpha(ct^\alpha)\right]^2,\\
&\vdots\\
K_1^n(t)&=M[K_1^{n-1}(t)]=\frac{a_1c^nt^{n\alpha}}{\Gamma(\alpha+n)}+Ac^nI^{(n+1)\alpha}\left[E_\alpha(ct^\alpha)\right]^2.
\end{align*}
Hence \begin{equation}\label{2.2EX2.5}
K_1(t)=\sum_{m=0}^{\infty}K_1^m(t)=a_1E_\alpha(ct^\alpha)+A\sum_{m=0}^{\infty}c^mI^{(m+1)\alpha}\left[E_\alpha(ct^\alpha)\right]^2.
\end{equation}
Using (\ref{2.2EX2.4}) and (\ref{2.2EX2.5}), solution (\ref{2.2SOL2}) takes the following form:
\begin{align*}
f(t,x,y)=a_1E_\alpha(ct^\alpha)+A\sum_{m=0}^{\infty}c^mI^{(m+1)\alpha}\left[E_\alpha(ct^\alpha)\right]^2+\left[a_2E_\alpha(ct^\alpha)\right]x^\beta+\left[a_3E_\alpha(ct^\alpha)\right]y^{\gamma}.
\end{align*}
\subsubsection{Exact solution of fractional scale wave equation in (1+2) dimension}
	Consider fractional version of scale wave equation
	\begin{equation}\label{2.2EX3}
\frac{\partial ^{\beta}}{\partial x^{\beta}}\left(\frac{\partial ^{\beta}f}{\partial x^{\beta}}\right)+\frac{\partial ^{\gamma}}{\partial y^{\gamma}}\left(\frac{\partial ^{\gamma}f}{\partial y^{\gamma}}\right)-a\frac{\partial ^{\alpha}f}{\partial t^{\alpha}}-\frac{\partial ^{\alpha+1}f}{\partial t^{\alpha+1}}=0,~t>0,~\alpha,~\beta,~\gamma \in (0,1].
	\end{equation}
\noindent Comparing with the equation (\ref{2.HFPDE}) we note that
\begin{equation*}
N[f]=\frac{\partial ^{\beta}}{\partial x^{\beta}}\left(\frac{\partial ^{\beta}f}{\partial x^{\beta}}\right)+\frac{\partial ^{\gamma}}{\partial y^{\gamma}}\left(\frac{\partial ^{\gamma}f}{\partial y^{\gamma}}\right).
\end{equation*}
Observe that $I^2=\mathfrak{L}\{E_{\beta}(\lambda_1x^{\beta}),E_{\gamma}(-\lambda_2y^{\gamma})\}$ is one of the required invariant subspaces, since
\begin{align*}
N[k_1E_{\beta}(\lambda_1x^{\beta})+k_2E_{\gamma}(-\lambda_2y^{\gamma})]=\lambda_1^2k_1E_{\beta}(\lambda_1x^{\beta})+\lambda_2^2k_2E_{\gamma}(-\lambda_2y^{\gamma})\in I^2.
\end{align*}
Hence $f(t,x,y)=K_1(t)E_{\beta}(\lambda_1x^{\beta})+K_2(t)E_{\gamma}(-\lambda_2y^{\gamma})$ is a solution of the fractional scale wave equation (\ref{2.2EX3}), where $K_1(t)$ and $K_2(t)$ are the unknown functions that satisfy following system of FODEs.
\begin{align}
a\dfrac{ d ^{\alpha} K_1(t)}{ d t^ {\alpha}}+\dfrac{ d ^{\alpha+1} K_1(t)}{ d t^ {\alpha+1}}&=\lambda_1^2K_1(t),\label{2.2EX3.1}\\
a\dfrac{ d ^{\alpha} K_2(t)}{ d t^ {\alpha}}+\dfrac{ d ^{\alpha+1} K_2(t)}{ d t^ {\alpha+1}}&=\lambda_2^2 K_2(t),\label{2.2EX3.2}
\end{align}
Using Laplace transform technique to Eqs. (\ref{2.2EX3.1})-(\ref{2.2EX3.2}), we deduce the exact solution of fractional scale wave equation (\ref{2.2EX3}) as
\begin{align*}
f(t,x,y)&=\left[b_1\sum_{m=0}^{\infty}\frac{\lambda_2^{2m}}{m!}t^{(\alpha+1)m}E_{1,{\alpha}m+1}^{(m)}(-at)+\left(b_1+b_2\right)\sum_{m=0}^{\infty}\frac{\lambda_2^{2m}}{m!}t^{(\alpha+1)m+1}E_{1,{\alpha}m+2}^{(m)}(-at)\right]E_{\beta}(\lambda_2x^{\beta})\\
&+\left[a_1\sum_{m=0}^{\infty}\frac{\lambda_1^{2m}}{m!}t^{(\alpha+1)m}E_{1,{\alpha}m+1}^{(m)}(-at)+\left(a_1+a_2\right)\sum_{m=0}^{\infty}\frac{\lambda_2^{2m}}{m!}t^{(\alpha+1)m+1}E_{1,{\alpha}m+2}^{(m)}(-at)\right]E_{\gamma}(-\lambda_2y^{\gamma}),
\end{align*}where $a_1, a_2, b_1$ and $b_2 \in \mathbb{R}.$ 
\subsubsection{Solutions of fractional order Boussinesq equation}
	Consider the IVP for fractional order Boussinesq equation where $t>0,~\alpha,~ \beta \in (0,1].$ 
		\begin{align}
		&\frac{\partial ^{\alpha}f}{\partial t^{\alpha}}=\frac{\partial ^{\beta}}{\partial x^{\beta}}\left((rf+s)\frac{\partial ^{\beta}(rf+s)}{\partial x^{\beta}}\right)+\frac{\partial ^{\beta}}{\partial y^{\gamma}}\left((rf+s)\frac{\partial ^{\gamma}(rf+s)}{\partial y^{\gamma}}\right)=N[f],\label{2.2EX4}\\
	&	f(0,x,y)=\frac{9}{5}+e^2y^{\gamma},\label{2.2IC4}
		\end{align}where $r, s$ are arbitrary.
		For $\alpha=\beta=\gamma=1$, Eq. (\ref{2.2EX4}) is a two dimensional heat and mass transfer equation with temperature dependent diffusion coefficient \cite{zaitsev2003handbook}.\\
\noindent We choose $I^3=\mathfrak{L}\{1,x^\beta,y^\gamma\}$	which satisfies the invariant subspace property as follows:
\begin{equation*}
N[k_1+k_2x^\beta +k_3y^\gamma]=r^2\left[\Gamma(\beta+1)^2k_2^2+\Gamma(\gamma+1)^2k_3^2\right]\in I^3.
\end{equation*}	
In view of Theorem (\ref{2.THE2}), an exact solution of Eq. (\ref{2.2EX4}) has the form
\begin{equation*}\label{2.2SOL4}
f(t,x,y)=K_1(t)+K_2(t)x^\beta +K_3(t)y^\gamma,
\end{equation*}
where the unknown functions $K_1(t), K_2(t)$ and $K_3(t)$ are to be evaluated by solving the following system:
\begin{align}
\dfrac{ d ^{\alpha} K_1(t)}{ d t^ {\alpha}}&=r^2\left[\Gamma(\beta+1)^2K_2(t)^2+\Gamma(\gamma+1)^2K_3(t)^2\right],\label{2.2EX4.1}\\
\dfrac{ d ^{\alpha} K_2(t)}{ d t^ {\alpha}}&=0,\label{2.2EX4.2}\\
\dfrac{ d ^{\alpha} K_3(t)}{ d t^ {\alpha}}&=0\label{2.2EX4.3}.
\end{align}
Clearly $K_2(t)=a_2$ and $K_3(t)=a_3$, where $a_1$, $a_2$ are constants. Eq. (\ref{2.2EX4.1}) becomes
\begin{equation}\label{2.2EX4.4}
\dfrac{ d ^{\alpha} K_1(t)}{ d t^{\alpha}}=r^2[a_2^2\Gamma(\beta+1)^2+a_3^2\Gamma(\gamma+1)^2].
\end{equation}Solving Eq. (\ref{2.2EX4.4}), we get $
K_1(t)=a_1+\dfrac{r^2[a_2^2\Gamma(\beta+1)^2+a_3^2\Gamma(\gamma+1)^2]t^\alpha}{\Gamma(\alpha+1)}.$ Hence exact solution of IVP (\ref{2.2EX4})-(\ref{2.2IC4}) is
\begin{equation}\label{2.2SOL41}
	f(t,x,y)=\dfrac{9}{5}+\dfrac{e^4r^2\Gamma(\gamma+1)^2}{\Gamma(\alpha+1)}t^\alpha+e^2y^\gamma.
	\end{equation}
	The solution (\ref{2.2SOL41}) is plotted in Fig. 5.
	\begin{figure}[H]
	\centering
	\includegraphics[scale=0.5]{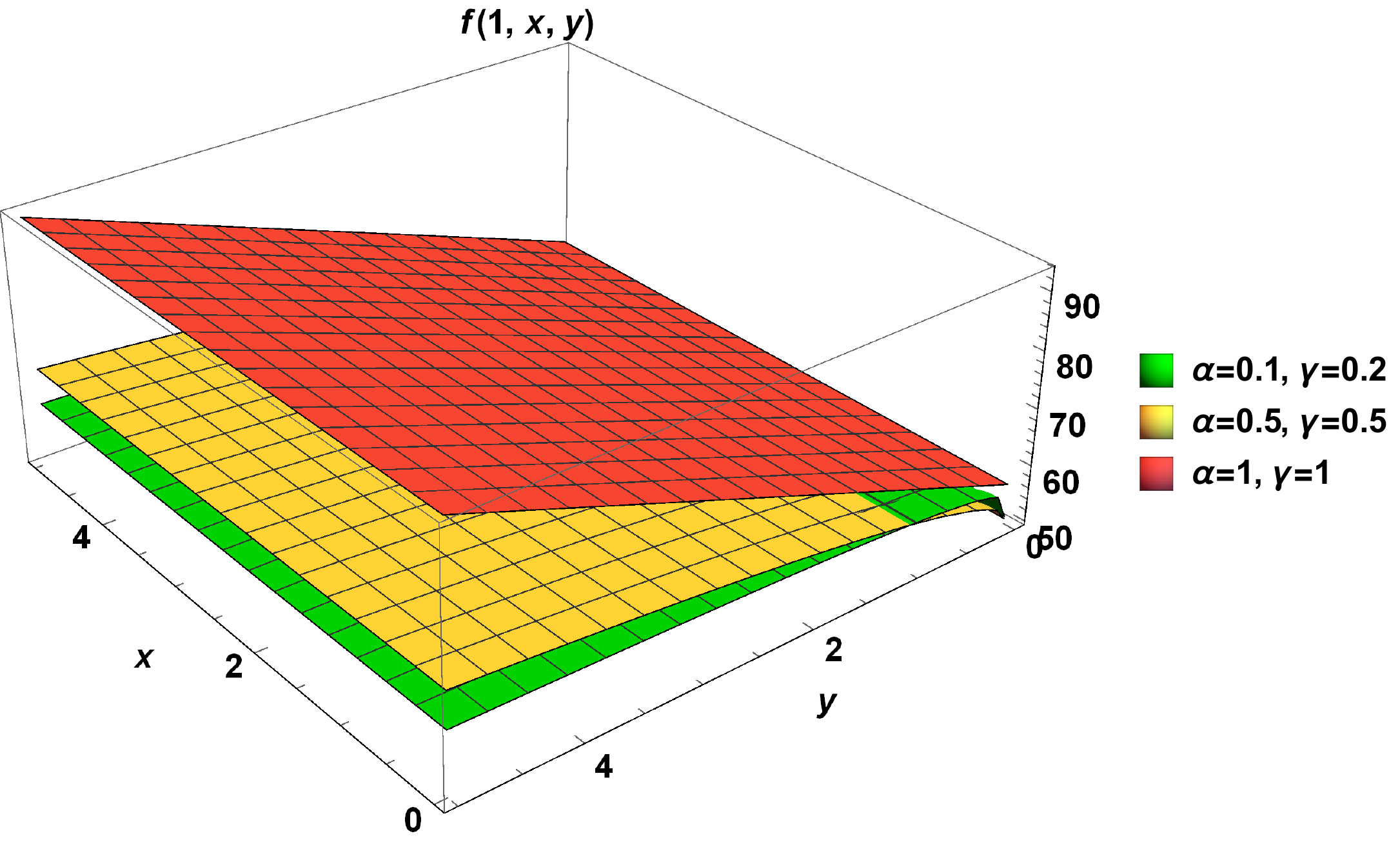}
	\caption{Plot of solution of the IVP (\ref{2.2EX4})-(\ref{2.2IC4}) for different values of $\alpha$ and $\gamma$ at $t=1.$}
\end{figure}	
\textbf{Note:} It can be verified that the Eq. (\ref{2.2EX4}) also admits another invariant subspace $I^4=\mathfrak{L}\{1,x^{2\beta},y^{2\gamma},x^\beta y^\gamma\}$. Thus proceeding on similar lines as previous examples another exact solution corresponding to $I^4$ can be found.
\subsubsection{Exact solution of fractional diffusion like PDE in (1+2) dimensions}
	Consider fractional PDE with initial condition as follows
	\begin{align}
	&\frac{\partial ^{\alpha}f}{\partial t^{\alpha}}=\frac{1}{2}\left(x^{2\beta}\frac{\partial ^{2\gamma}f}{\partial y^{2\gamma}}+y^{2\gamma}\frac{\partial ^{2\beta}f}{\partial x^{2\beta}}\right),~t>0,~\alpha,~\beta, ~\gamma \in (0,1],\label{2.2EX5}\\
&f(0,x,y)=y^{2\gamma}.\label{2.2IC5}
	\end{align}
\noindent	Here $I^3=\mathfrak{L}\{1,x^{2\beta},y^{2\gamma}\}$	is an invariant subspace for $N[f]=\dfrac{1}{2}\left(x^{2\beta}\dfrac{\partial ^{2\gamma}f}{\partial y^{2\gamma}}+y^{2\gamma}\dfrac{\partial ^{2\beta}f}{\partial x^{2\beta}}\right)$ as
\begin{equation*}
N[k_1+k_2x^{2\beta} +k_3y^{2\gamma}]=\frac{\Gamma(2\gamma+1)}{2}k_3x^{2\beta}+\frac{\Gamma(2\beta+1)}{2}k_2y^{2\gamma}\in I^3.
\end{equation*}	
Since criteria of Theorem (\ref{2.THE2}) is satisfied, an exact solution of Eq. (\ref{2.2EX5}) is of the form
\begin{equation}\label{2.2SOL5}
f(t,x,y)=K_1(t)+K_2(t)x^{2\beta} +K_3(t)y^{2\gamma},
\end{equation}
where the unknown functions $K_1(t), K_2(t)$ and $K_3(t)$ satisfy following system of FODEs:
\begin{align}
\dfrac{ d ^{\alpha} K_1(t)}{ d t^ {\alpha}}&=0,\label{2.2EX5.1}\\
\dfrac{ d ^{\alpha} K_2(t)}{ d t^ {\alpha}}&=\frac{\Gamma(2\gamma+1)}{2}K_3(t),\label{2.2EX5.2}\\
\dfrac{ d ^{\alpha} K_3(t)}{ d t^ {\alpha}}&=\frac{\Gamma(2\beta+1)}{2}K_2(t)\label{2.2EX5.3}.
\end{align}
Clearly from (\ref{2.2EX5.2})-(\ref{2.2EX5.3}), we deduce
\begin{equation}\label{2.2EX5.4}
\dfrac{ d ^{\alpha}}{ d t^ {\alpha}}\left(\dfrac{ d ^{\alpha} K_2(t)}{ d t^ {\alpha}}\right)=\lambda K_2(t),~~~\lambda=\lambda_1 \lambda_2,~\lambda_1=\frac{\Gamma(2\gamma+1)}{2},~\lambda_2=\frac{\Gamma(2\beta+1)}{2}.
\end{equation}
Taking Laplace transform of Eq. (\ref{2.2EX5.4}), we get
\begin{align*}
&s^{\alpha}\mathcal{L}(D^{\alpha}_tK_2(t);s)-s^{\alpha-1}\left(D^{\alpha}_tK_2(0)\right)=\lambda \tilde{K}_1(s)\\
&\tilde{K}_2(s)=b_1\frac{s^{2\alpha -1}}{s^{2\alpha}-\lambda}+b_2\frac{s^{\alpha -1}}{s^{2\alpha}-\lambda},~\text{where}~b_1=K_2(0),~b_2=D^{\alpha}K_2(0).
	\end{align*}
Performing inverse Laplace transform, we obtain $
K_2(t)=b_1 E_{2\alpha,1}(\lambda t^{2\alpha})+b_2 t^{\alpha}E_{2\alpha,\alpha+1}(\lambda t^{2\alpha}),$ where $b_1$ and $b_2$ are arbitrary. Substituting value of $K_2(t)$ in Eq. (\ref{2.2EX5.3}) and solving the same, we get $
K_3(t)=c+ \lambda_2 b_1t^{\alpha}E_{2\alpha,\alpha+1}(\lambda_2 t^{2\alpha})+\lambda_2 b_2 t^{2\alpha}E_{2\alpha,2\alpha+1}(\lambda_2 t^{2\alpha}).$\\
Hence we get an exact solution of Eq. (\ref{2.2EX5}) as
\begin{align}\label{2.2SOL5.1}
f(t,x,y)&=a+\left[b_1 E_{2\alpha,1}(\lambda t^{2\alpha})+b_2 t^{\alpha}E_{2\alpha,\alpha+1}(\lambda t^{2\alpha})\right]x^{2\beta}+\left[c+ \lambda_2 b_1t^{\alpha}E_{2\alpha,\alpha+1}(\lambda_2 t^{2\alpha})\right.\nonumber\\
&\left.+\lambda_2 b_2 t^{2\alpha}E_{2\alpha,2\alpha+1}(\lambda_2 t^{2\alpha})\right]y^{2\gamma},~~~c=b_2,~\text{and}~ a,b_1,b_2 \in \mathbb{R}.
\end{align}Using initial condition (\ref{2.2IC5}), solution (\ref{2.2SOL5.1}) reduces to
\begin{align}\label{2.2SOL5.2}
f(t,x,y)&=\left[ t^{\alpha}E_{2\alpha,\alpha+1}(\lambda t^{2\alpha})\right]x^{2\beta}+\left[1+\lambda_2  t^{2\alpha}E_{2\alpha,2\alpha+1}(\lambda_2 t^{2\alpha})\right]y^{2\gamma}.
\end{align}
\textbf{Note:} For $\alpha = \beta = \gamma =1,~c=b_2$ and  under an initial condition $f(0,x,y)=y^2$, solution (\ref{2.2SOL5.1}) reduces to 
\begin{align}\label{2.2SOL5.3}
f(t,x,y)=(\sinh t)x^2+(\cosh t)y^2.
\end{align}
Solution (\ref{2.2SOL5.3}) coincides with the solution for the heat like equation $\frac{\partial f}{\partial t}=\frac{1}{2}\left(x^2\frac{\partial ^{2}f}{\partial y^{2}}+y^{2}\frac{\partial ^{2}f}{\partial x^{2}}\right)$, obtained by NIM \cite{al2017daftardar}. \\
The solution (\ref{2.2SOL5.2}) is depicted in Fig. 6.
	\begin{figure}[H]
			\centering
		\includegraphics[scale=0.5]{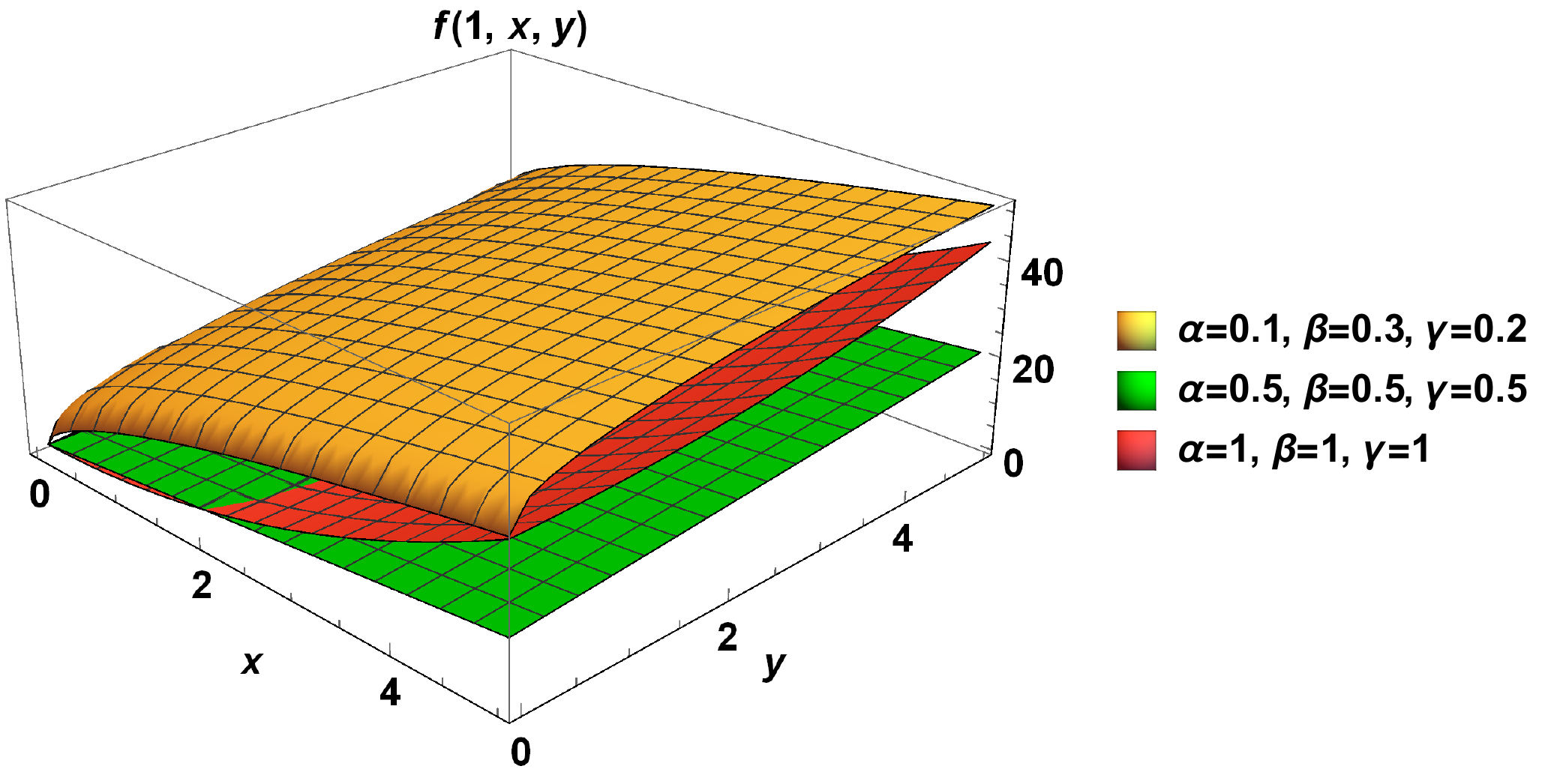}
		\caption{3D-Plot of solution of the IVP (\ref{2.2EX5})-(\ref{2.2IC5}) at $t=1.$}
\end{figure}
	
\section{Fractional differential operators with mixed partial derivatives}
It should be noted that invariant subspace method can also be employed for solving FPDEs with fractional differential operators $N^l[f]~(l=1,2)$ involving mixed fractional partial derivatives. Analysis of such FPDEs can be done on the similar lines as done in Sec. (3) and Sec. (4). We illustrate the method by solving an example.\\
 Consider the following system of nonlinear FPDEs for $t>0, 0< \alpha_1,~\alpha_2,~\beta,~\gamma\leq 1.$
\begin{align}\label{2.3EX1}
\frac{\partial^{\alpha_1}f}{\partial t^{\alpha_1}}&=\frac{\partial^{\gamma}}{\partial t^{\gamma}}\left(\frac{\partial^{2\beta}f}{\partial x^{2\beta}}\right)+m_1\left(g\frac{\partial^{\beta}g}{\partial x^{\beta}}\right)+a_1m_1g^2,\nonumber \\
\frac{\partial^{\alpha_2}g}{\partial t^{\alpha_2}}&=\frac{\partial^{\gamma}}{\partial t^{\gamma}}\left(\frac{\partial^{2\beta}g}{\partial x^{2\beta}}\right)+ n_1\frac{\partial^{\beta}}{\partial x^{\beta}}\left(\frac{\partial^{\beta}f}{\partial x^{\beta}}\right)-a_2^2n_1f+n_2g,
\end{align}	where $\gamma < \alpha_1, \gamma < \alpha_2,$ and $a_1,a_2,m_1,m_2,n_1$ and $n_2$ are arbitrary constants.\\

\noindent Observe that $I=I_1^2\times I_2^1=\mathfrak{L}\{E_{\beta}(a_2x^{\beta}),E_{\beta}(-a_2x^{\beta})\}\times \mathfrak{L}\{E_{\beta}(-a_1x^{\beta})\}$,\\ is an invariant subspace corresponding to the given fractional differential operator. Hence the system (\ref{2.3EX1}) admits solution of the form
\begin{equation}\label{2.3SOL1}
f(t,x)=K_1(t)E_{\beta}(a_2x^{\beta})+K_2E_{\beta}(-a_2x^{\beta}), ~g(t,x)=L_1(t)E_{\beta}(-a_1x^{\beta}),
\end{equation}such that
\begin{align}
\dfrac{ d ^{\alpha_1} K_1(t)}{ d t^ {\alpha_1}}&=a_2^2\dfrac{ d ^{\gamma} K_1(t)}{ d t^ {\gamma}},\label{2.3EX1.2}\\
\dfrac{ d ^{\alpha_1} K_2(t)}{ d t^ {\alpha_1}}&=a_2^2\dfrac{ d ^{\gamma} K_2(t)}{ d t^ {\gamma}},\label{2.3EX1.3}\\
\dfrac{ d ^{\alpha_2} L_1(t)}{ d t^ {\alpha_2}}&=a_1^2\dfrac{ d ^{\gamma} L_1(t)}{ d t^ {\gamma}}+n_2L_1(t).\label{2.3EX1.4}
\end{align}
Solving the system of FODEs (\ref{2.3EX1.2})-(\ref{2.3EX1.4}) and substituting the values of $K_1(t), K_2(t)$ and $L_1(t)$ in Eq. (\ref{2.3SOL1}) we get an exact solution of the system (\ref{2.3EX1}) as
\begin{align*}
f(t,x)&=\left[b_1\sum_{m=0}^{\infty}\left(\dfrac{a_2^{2m}t^{(\alpha_1-\gamma)m}}{\Gamma((\alpha_1-\gamma)m+1)}-\dfrac{a_2^{2m+2}t^{(\alpha_1-\gamma)(m+1)}}{\Gamma((\alpha_1-\gamma)(m+1)+1)}\right)\right]E_{\beta}(a_2x^{\beta})\\
&+\left[b_2\sum_{m=0}^{\infty}\left(\dfrac{a_2^{2m}t^{(\alpha_1-\gamma)m}}{\Gamma((\alpha_1-\gamma)m+1)}-\dfrac{a_2^{2m+2}t^{(\alpha_1-\gamma)(m+1)}}{\Gamma((\alpha_1-\gamma)(m+1)+1)}\right)\right]E_{\beta}(-a_2x^{\beta}),\\
g(t,x)&=\left[c_1\sum_{m=0}^{\infty}\left(\frac{n_2^m}{m!}t^{\alpha_2 m}E_{\alpha_2-\gamma, \gamma m+1}^{(m)}(a_1^2t^{\alpha_2-\gamma})-\frac{a_1^2n_2^m}{m!}t^{2\alpha_2 -2\gamma}E_{\alpha_2-\gamma,\alpha_2+\gamma( m-1)+1}^{(m)}(a_1^2t^{\alpha_2-\gamma})\right)\right]E_{\beta}(-a_1x^{\beta}),
\end{align*}where $b_1,b_2,c_1$ are arbitrary constants.
	\section{Conclusions and future scope}
		Present article extends invariant subspace method for solving nonlinear systems of FPDEs involving both time and space fractional derivatives. In this method system of FPDEs are reduced to systems of FODEs which can be further solved by existing methods. The proposed method has been illustrated by finding exact solutions of various systems, \textit{viz.,} system of generalized fractional Burger's equations, coupled fractional Boussinesq equations, fictionalized system of KdV type of equations. Further we demonstrate how invariant subspace method can be employed for FPDEs in (1+n) dimension. The effectiveness of this method is illustrated by finding closed form solutions for                                                                                                                                                                                                                                                                                                                                                                                                                                                                                                                                                                          fractional dispersive KdV equation in (1+n) dimensions, fractional population model, fractional scale wave equation, fractional order Bossinesq equation and fractional diffusion like PDE in (1+2) dimensions We have modelled equations using RL as well as Caputo derivatives and considered multi-term expressions in time. Invariant subspace method is also used to find unique solutions along with initial conditions.
		
		We observe that (1+1) dimensional FPDEs admit more than one invariant subspaces, each of which yields different exact solution \cite{choudhary2017invariant}. Similarly FPDEs in higher dimensions admit more than one invariant subspaces. The solutions obtained can be expressed in terms of Mittag-Leffler functions, fractional trigonometric functions etc. We demonstrate that invariant subspace method is very effective tool in finding exact solutions of wide class of linear and non linear systems of FPDEs and FPDEs in higher dimensions. Further we have also employed invariant subspace method for solving FPDEs with fractional differential operator involving mixed fractional partial derivatives.
		
	Due to lack of composition rule and chain rule, we have severe limitations in finding invariant subspaces corresponding to fractional operator using existing algorithms developed for ordinary PDEs. We have found invariant subspaces for the fractional operators by trial and error method. Developing proper theory and algorithms for finding all sets of  invariant subspaces for fractional operators is an open area to explore. Similarly, suitable theory may be developed for finding maximum dimension of invariant subspaces.  

	\vskip 0.3cm

\section*{Acknowledgement}

Sangita Choudhary acknowledges the National Board for Higher Mathematics, India, for the award of Senior Research Fellowship.
	\bibliography{4BIB.bib}
\bibliographystyle{abbrv}

\end{document}